\documentclass[11pt,a4paper,reqno,draft]{amsart}

\usepackage{amssymb,amsmath}
\usepackage{amsthm}
\usepackage{latexsym}
\usepackage{graphics}
\usepackage{euscript}
\usepackage[dvips]{graphicx}
\usepackage{mathrsfs}
\usepackage{mathabx}
\usepackage{tikz}
\usepackage{pgf}
\usetikzlibrary{arrows,decorations.pathmorphing,backgrounds,positioning,fit}
\raggedbottom


\hoffset=-13mm
\setlength{\textwidth}{15cm}
\setlength{\textheight}{23cm}

\parskip=0.6ex


\newtheorem{thm}{Theorem}[section]
\newtheorem{prop}[thm]{Proposition}

\newtheorem{lem}[thm]{Lemma}

\newtheorem{defn}[thm]{Definition}

\newtheorem{sexample}[thm]{Example}

\newtheorem{examples}[thm]{Examples}
\newtheorem{squestion}[thm]{Question}

\newtheorem{sremark}[thm]{Remark}
\newtheorem{remarks}[thm]{Remarks}

{\em}

{\em}

\newcounter{substep}
\def\thesubstep{\arabic{substep}}

{\em}

\newcounter{subsubstep}
\def\thesubsubstep{\arabic{subsubstep}}

{\em}

\numberwithin{equation}{section}

\setcounter{secnumdepth}{4}


\newcommand{\K}{{\mathbb K}} 
\newcommand{\Z}{{\mathbb Z}} \newcommand{\R}{{\mathbb R}}
 \newcommand{\C}{{\mathbb C}}

\newcommand{\sph}{{\mathbb S}} 
 \newcommand{\PP}{{\mathbb P}}



\newcommand{\cl}{\operatorname{Cl}}

\newcommand{\Jac}{\operatorname{Jac}}
\newcommand{\im}{\operatorname{im}}
\newcommand{\Bl}{\operatorname{Bl}}
\newcommand{\hcd}{\operatorname{hcd}}
\newcommand{\supp}{\operatorname{supp}}


\newcommand{\x}{{\tt x}} \newcommand{\y}{{\tt y}}
\newcommand{\z}{{\tt z}} \renewcommand{\t}{{\tt t}}
 \renewcommand{\u}{{\tt u}} 


\newcommand{\ol}{\overline}

\newcommand{\veps}{\varepsilon}

\title[On the set of points at infinity of a polynomial image of $\R^n$]{On the set of points at infinity\\ of a polynomial image of $\R^n$}


\date{}

\author{Jos\'e F. Fernando}
\address{Departamento de \'Algebra, Facultad de Ciencias Matem\'aticas, Universidad Complutense de Madrid, 28040 MADRID (SPAIN)}
\email{josefer@mat.ucm.es}
\thanks{First author supported by Spanish GR MTM2011-22435 and GAAR Grupos UCM 910444.\\
\indent This article is based on a part of the doctoral dissertation of the second author, which has being written under the supervision of the first.}


\author{Carlos Ueno}
\address{Departamento de Matem\'aticas, IES La Vega de San Jos\'e, Paseo de San Jos\'e, s/n, Las Palmas de Gran Canaria, 35015 LAS PALMAS (SPAIN).
}
\email{cuenjac@gmail.com}
\begin{document}

\begin{abstract}
In this work we prove that the \em set of points at infinity $S_\infty:=\cl_{\R\PP^m}(S)\cap\mathsf{H}_\infty$ \em of a semialgebraic set $S\subset\R^m$ that is the image of a polynomial map $f:\R^n\to\R^m$ is connected. This result is no longer true in general if $f$ is a regular map. However, it still works for a large family of regular maps that we call \em quasi-polynomial maps\em.
\end{abstract}
\date{Revised version: 10/06/2014}

\subjclass[2010]{14P10, 14E05 (primary), 14P05, 14E15 (secondary)}
\keywords{Polynomial and regular maps and images, quasi-polynomial maps, set of points at infinity, connectedness.}
\maketitle

\section{Introduction}\label{s1}

A map $f:=(f_1,\ldots,f_m):\R^n\to\R^m$ is a {\em polynomial map} if each component $f_i\in\R[\x]:=\R[\x_1,\ldots,\x_n]$. A subset $S$ of $\R^m$ is a {\em polynomial image} of $\R^n$ if there exists a polynomial map $f:\R^n\to\R^m$ such that $S=f(\R^n)$. More generally, the map $f$ is {\em regular} if each component $f_i$ is a regular function of $\R(\x):=\R(\x_1,\ldots,\x_n)$, that is, $f_i:=\frac{g_i}{h_{i}}$ is a quotient of polynomials such that the zero set of $h_{i}$ is empty. Analogously, a subset $S$ of $\R^m$ is a {\em regular image} of $\R^n$ if it is the image $S=f(\R^n)$ of $\R^n$ given by a regular map $f$. 

The present work continues the general study of polynomial and regular images of Euclidean spaces already began in \cite{fg1,fg2}. A celebrated Theorem of Tarski-Seidenberg \cite[1.4]{bcr} says that the image of any polynomial map (and more generally of a regular map) $f:\R^m\rightarrow\R^n$ is a semialgebraic subset $S$ of $\R^n$, that is, it can be written as a finite boolean combination of polynomial equations and inequalities, which we will call a \em semialgebraic \em description. By \em elimination of quantifiers \em $S$ is semialgebraic if it has a description by a first order formula \em possibly with quantifiers\em. Such a freedom gives easy semialgebraic descriptions for topological operations: interiors, closures, borders of semialgebraic sets are again semialgebraic.

In an \em Oberwolfach \em week \cite{g} Gamboa proposed to characterize the semialgebraic sets of $\R^m$ that are polynomial images of $\R^n$ for some $n\geq1$. The open ones deserve a special attention in connection with the real Jacobian Conjecture \cite{j1,j2,p}. The interest of polynomial (and also regular) images arises because there exist certain problems in Real Algebraic Geometry that can be reduced for such sets to the case $S=\R^n$ (see \cite{fu1,fu2}). Examples of such problems are:
\begin{itemize}
\item[$\bullet$]Optimization of polynomial (and/or regular) functions on $S$, 
\item[$\bullet$]Characterization of the polynomial (or regular functions) that are positive semidefinite on $S$ (Hilbert's 17th problem and Positivstellensatz),
\item[$\bullet$]Computation of trajectories inside $S$ parametrizable by polynomial (or regular) maps.
\end{itemize}

\subsection{Main result}
We denote the projective space of coordinates $(x_0:x_1:\cdots:x_m)$ with $\R\PP^m$. It contains $\R^m$ as the set of points with $x_0=1$. The hyperplane at infinity $\mathsf{H}_\infty$ has equation $x_0=0$. Given a semialgebraic set $S\subset\R^m$, the \em set of points at infinity of $S$ \em is $S_\infty:=\cl_{\R\PP^m}(S)\cap\mathsf{H}_\infty$. Our main result in this work is the following.

\begin{thm}\label{main1}
Let $f:\R^n\to\R^m$ be a non-constant polynomial map and denote $S:=f(\R^m)$. Then $S_\infty$ is non-empty and connected.
\end{thm}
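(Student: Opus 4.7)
\medskip
\noindent\textbf{Proof plan.} Non-emptiness is elementary: if $f$ is non-constant, some coordinate $f_i$ is a non-constant polynomial in $\R[\x]$, and restricting to a line on which $f_i$ is non-constant yields an unbounded univariate polynomial. Hence $S$ is unbounded in $\R^m$, and any divergent sequence yields, by compactness of $\R\PP^m$, a subsequential limit in $\mathsf{H}_\infty$, so $S_\infty\neq\emptyset$. For connectedness, the strategy is to extend $f$ to a morphism of compact real projective varieties. Setting $d:=\max_i\deg f_i$ and letting $\widetilde f_i$ denote the degree-$d$ homogenization of $f_i$ (padded by $x_0^{d-\deg f_i}$), the rational map
\[
F\colon\R\PP^n\longrightarrow\R\PP^m,\qquad[x_0:\x]\longmapsto[x_0^d:\widetilde f_1(x_0,\x):\cdots:\widetilde f_m(x_0,\x)]
\]
(defined outside its base locus) has base locus contained in $\mathsf{H}_\infty^{(n)}=\{x_0=0\}$, since the first coordinate vanishes only there. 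By resolution of indeterminacies (iterated blowups along smooth real centers above the base locus) one obtains a smooth real projective variety $X$, a birational morphism $\sigma\colon X\to\R\PP^n$, and a regular morphism $\widehat F\colon X\to\R\PP^m$ extending $F\circ\sigma$. Write $E:=\sigma^{-1}(\mathsf{H}_\infty^{(n)})$; then $\sigma$ restricts to an isomorphism $X\setminus E\cong\R^n$ conjugating $\widehat F$ with $f$, and by compactness $\widehat F(X)=\cl_{\R\PP^m}(S)$.

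Put $E_\infty:=\widehat F^{-1}(\mathsf{H}_\infty^{(m)})$. Since $\widehat F(X\setminus E)=f(\R^n)\subset\R^m$, one has $E_\infty\subseteq E$, and
\[
S_\infty=\widehat F(X)\cap\mathsf{H}_\infty^{(m)}=\widehat F(E_\infty).
\]
A scaling argument shows that the strict transform $\widetilde H$ of $\mathsf{H}_\infty^{(n)}$ is contained in $E_\infty$: on the dense open locus of $\widetilde H$ where $F$ is already defined, the first coordinate $x_0^d$ of $F$ vanishes identically, and the continuity of $\widehat F$ together with the closedness of $\mathsf{H}_\infty^{(m)}$ propagates this to all of $\widetilde H$. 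Now $\widetilde H$ is connected, since it is a successive real blowup of the connected variety $\R\PP^{n-1}$ along smooth real centers. The remaining components of $E_\infty$ are (parts of) exceptional divisors of the resolution; by the tree-like incidence of the exceptional locus — each blowup center lies inside $\mathsf{H}_\infty^{(n)}$ and so its exceptional divisor meets the strict transform of its ambient stratum — every such component connects to $\widetilde H$ directly or through intermediate exceptional divisors. This yields connectedness of $E_\infty$, and hence of $S_\infty=\widehat F(E_\infty)$ as the continuous image of a connected set.

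\noindent\emph{Principal obstacle.} The delicate step is ensuring the connectedness of $E_\infty$ over the real numbers. Over $\C$ the incidence of the exceptional locus suffices, but real blowups along centers with few real points may produce exceptional divisors whose real topology is not connected to $\widetilde H(\R)$. This must be addressed either by choosing the resolution carefully (for instance, by only blowing up centers whose real loci have the expected dimension), presumably leveraging the hypothesis that $f$ is polynomial rather than merely regular (the abstract warns that the conclusion fails for general regular maps), or by replacing the resolution argument with a direct semialgebraic one: apply the curve selection lemma to a pair of points of $S_\infty$ to lift them to divergent semialgebraic curves in $\R^n$, connect their limiting points inside the connected real variety $\mathsf{H}_\infty^{(n)}\cong\R\PP^{n-1}$, and transport the resulting arc back to a path in $S_\infty$ through the continuous extension $\widehat F$.
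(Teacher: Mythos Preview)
Your outline is sound up to the point you yourself flag: the connectedness of $E_\infty$ over $\R$ is the entire content of the theorem, and your ``tree-like incidence'' sketch does not establish it. Over $\C$ the exceptional configuration is connected, but passing to real points can disconnect it: a non-invariant exceptional component has empty real locus, so two invariant components that are linked only through a chain of non-invariant ones need not have intersecting real parts. Nothing in your argument rules this out, and your fallback suggestion (connect limiting directions inside $\R\PP^{n-1}$ and push forward by $\widehat F$) does not work as stated either, because a point of $S_\infty$ need not lie in $\widehat F(\widetilde H)$ --- it may come only from an exceptional component of $E_\infty$.

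The paper closes this gap by two substantial moves you are missing. First, it reduces the general case to $n=2$: given $p,q\in S_\infty$, the rational curve selection lemma (Lemma~\ref{polcurve}) produces Laurent paths $\alpha,\beta$ in $\R^n$ along which $f$ tends to $p,q$, and these are packaged into a single \emph{polynomial} map $h:\R^2\to\R^n$ with $p,q\in(f\circ h)(\R^2)_\infty\subset S_\infty$. Second, for $n=2$ the real connectedness of $\widehat F_\R^{-1}(\mathsf{H}_\infty(\R))$ is proved by working with the invariant complex resolution and invoking two nontrivial ingredients: Stein factorization shows that $\widehat F_\C^{-1}(\mathsf{H}_\infty(\C))$ is connected, and a cohomological lemma (Lemma~\ref{crack}, using $H_1=H_2=0$ for $\C^2$ minus finitely many points) forces any two connected invariant curves in the exceptional configuration to meet in at most one point, which is then automatically real. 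This is exactly what upgrades the complex connectedness to real connectedness (statement~\ref{connected0}), and it is specific to surfaces --- hence the reduction step. Your proposal contains neither of these ideas; to make it into a proof you would have to supply analogues of both.
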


It seems a difficult matter to provide a full geometric characterization of all polynomial and/or regular images $S\subset\R^m$. We only know it for the $1$-dimensional case \cite{fe}. Even though, we have approached the representation as polynomial or regular images of ample families of $n$-dimensional semialgebraic sets whose boundaries are piecewise linear. We have focused on: convex polyhedra, their interiors, their exteriors and the closure of their exteriors \cite{fgu3,fu1,fu2,u2}. The proofs are constructive but the arguments are developed \em ad hoc\em. Two main difficulties arise:
\begin{itemize}
\item To develop a strategy to produce an either polynomial or regular map whose image is the desired semialgebraic set.
\item To prove the surjectivity of the constructed map.
\end{itemize}

In \cite{fg1} appear some straightforward properties that a polynomial (resp. regular) image $S\subset\R^m$ must satisfy: $S$ must be pure dimensional, connected, semialgebraic and its Zariski closure must be irreducible. It follows from \cite[3.1]{fg3} that $S$ must be \em irreducible \em in the sense proposed in \cite{fg3}. All these properties follow readily from the fact \cite[3.6]{fgu2}: 

\vspace{2mm}
\noindent $(*)$ \em Given two points $p,q\in S$, there exists a polynomial (resp. regular) image $L$ of $\R$ \em (also known as \em parametric semiline\em) \em contained in $S$ and passing through $p,q$\em. 

\vspace{2mm}
There are many examples of semialgebraic sets with property $(*)$ that are polynomial images of no $\R^n$. Take $S:=\{0\leq x\leq 1, 0\leq y\}\cup\{0\leq y\leq x\}\subset\R^2$, which satisfies $(*)$. By Theorem \ref{main1} $S$ is a polynomial image of no $\R^n$ because its set of points at infinity is disconnected. Consequently Theorem \ref{main1} provides a new obstruction to be a polynomial image of $\R^n$.

We wondered in \cite[7.3]{fg2} about the number of connected components of the exterior of a polynomial image of dimension $\geq2$. The first author was convinced that the answer was one, but the second author showed in \cite{u1} that this number can be arbitrarily large. Nevertheless, Theorem \ref{main1} is in the vein of our wrong initial position. 

\subsection{Strategy of the proof and structure of the article}
The proof of Theorem \ref{main1} involves techniques inspired by those employed by Jelonek in his works \cite{j1,j2} where he studies the geometry of the set of points ${\mathcal S}_f$ at which an either complex or real polynomial map $f:\K^n\to\K^m$ is not proper ($\K$ denotes either $\R$ or $\C$). We highlight the following: 
\begin{itemize}
\item[$\bullet$]\em Resolution of indeterminacy of rational maps \em defined on projective surfaces.
\item[$\bullet$]Sufficient conditions to guarantee that the intersection of two connected complex projective curves of a complex projective surface is either empty or a singleton.
\item[$\bullet$]A \em `rational' curve selection lemma\em. 
\end{itemize}
For the sake of the reader we include a careful exposition of these techniques in Section \ref{s2}. The reader can proceed directly to Section \ref{s3} and refer to the Preliminaries only when needed. In Section \ref{s3} we prove Theorem \ref{main1} in the more general setting of \em quasi-polynomial maps\em. In Section \ref{s4} we show that the set of points at infinity of the image of a general regular map does not need to be connected and we provide some enlightening examples. 

\subsection*{Acknowledgements}
The authors thank Prof. Jes\'us M. Ruiz for many helpful discussions and suggestions during the preparation of this article. The authors are also very grateful to S. Schramm for a careful reading of the final version and for the suggestions to refine its redaction.

\section{Preliminaries}\label{s2}

We write $\K$ to refer indistinctly to $\R$ or $\C$. We denote the hyperplane at infinity of $\K\PP^m$ with $\mathsf{H}_\infty(\K):=\{x_0=0\}$. Clearly, $\K\PP^m$ contains $\K^m$ as the set $\K\PP^m\setminus\mathsf{H}_\infty(\K)=\{x_0=1\}$. If $m=1$, we denote the point at infinity $\K\PP^1$ with $\{p_\infty\}:=\{x_0=0\}$ and if $m=2$, we write $\ell_\infty(\K):=\{x_0=0\}$ for the line at infinity of $\K\PP^2$. We use freely that the real projective space $\R\PP^m$ can be immersed in $\R^k$ for $k$ large enough as an affine non-singular real algebraic variety \cite[3.4.4]{bcr}. Thus, the closure in $\R\PP^m$ of a semialgebraic subset of $\R^m$ is again a semialgebraic set. It will be useful to understand real algebraic objects as fixed parts under conjugation of complex algebraic objects that are invariant under conjugation.

\subsection{Invariant projective objects.}\label{inv}
For each $n\geq1$ denote the complex conjugation with
$$
\sigma:=\sigma_n:\C\PP^n\to\C\PP^n,\ z=(z_0:z_1:\cdots:z_n)\mapsto\ol{z}=(\ol{z_0}:\ol{z_1}:\cdots:\ol{z_n}).
$$
Clearly, $\R\PP^n$ is the set of fixed points of $\sigma$. A set $A\subset\C\PP^n$ is called \em invariant \em if $\sigma(A)=A$. If $Z\subset\C\PP^n$ is an invariant non-singular (complex) projective variety, then $Z\cap\R\PP^n$ is a non-singular (real) projective variety. We say that a rational map $h:\C\PP^n\dashrightarrow\C\PP^m$ is \em invariant \em if $h\circ\sigma_n=\sigma_m\circ h$. Of course, $h$ is invariant if its components are homogeneous polynomials with real coefficients, so it provides by restriction a real rational map $h|_{\R\PP^n}:\R\PP^n\dashrightarrow\R\PP^m$. We use freely usual concepts of Algebraic Geometry such as: rational map, regular map, divisor, blow-up, etc. and refer the reader to \cite{ha,m,sh1,sh2} for further details. For the sake of the reader we denote complex dimension with $\dim_\C(\cdot)$ and real dimension with $\dim_\R(\cdot)$. Recall the following fact concerning the regularity of rational maps defined on a non-singular projective curve \cite[7.1]{m}.

\begin{lem}\label{curve}
Let $Z\subset\C\PP^n$ be a non-singular projective curve and $F:Z\dashrightarrow\C\PP^m$ a rational map. Then $F$ extends to a regular map $F':Z\to\C\PP^m$. In addition, if $Z,F$ are invariant, so is $F'$.
\end{lem}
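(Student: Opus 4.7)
The plan is to reduce the extension question to a local one at each point of $Z$, solve it using the discrete valuation ring structure on a smooth curve, and then deduce invariance from uniqueness of extensions.

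For the extension, fix $p\in Z$. Since $Z$ is non-singular of dimension one, the local ring $\mathcal{O}_{Z,p}$ is a discrete valuation ring; let $v_p$ be its valuation and $t$ a uniformizer. Choose an affine open neighborhood $U\subset Z$ of $p$ on which $F$ is represented by a tuple of regular functions $(F_0:F_1:\cdots:F_m)$, not all zero on the component of $Z$ through $p$. Setting $k:=\min_{0\le i\le m}v_p(F_i)$, the rescaled tuple $(F_0/t^k:\cdots:F_m/t^k)$ has entries in $\mathcal{O}_{Z,p}$, at least one of them a unit, and defines the same rational map. After shrinking $U$, this gives a regular representative of $F$ at $p$. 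Since two regular maps from a reduced variety into the separated variety $\C\PP^m$ that agree on a dense open set coincide on their common domain, these local extensions patch together into a well-defined regular map $F':Z\to\C\PP^m$ that extends $F$.

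For the invariance, suppose $\sigma_n(Z)=Z$ and $F\circ\sigma_n=\sigma_m\circ F$ on $\mathrm{dom}(F)$. Consider the composition $F'':=\sigma_m\circ F'\circ\sigma_n|_Z:Z\to\C\PP^m$. In terms of any local homogeneous representative of $F'$, this amounts to replacing the defining polynomials by those with conjugated coefficients, so $F''$ is again a regular map. On the (dense, invariant) open set $\mathrm{dom}(F)$ it agrees with $F$, and hence with $F'$. By the same uniqueness principle used above, $F''=F'$ on all of $Z$, which is precisely the relation $F'\circ\sigma_n=\sigma_m\circ F'$.

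The one genuinely delicate step is the local extension: it requires a uniformizer at every point so that the common factor in the homogeneous representative of $F$ can be stripped and no base points remain. This is where the one-dimensionality of $Z$ is indispensable, as the analogous statement fails already for smooth surfaces, where points of indeterminacy must instead be resolved by blow-ups (which is in fact the theme developed later in the preliminaries).
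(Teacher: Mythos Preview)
Your argument is correct. The paper does not actually prove this lemma: it records the extension statement as a known fact, citing \cite[7.1]{m}, and states the invariance addendum without justification. Your proof of the extension is the classical one found in that reference---use that $\mathcal{O}_{Z,p}$ is a discrete valuation ring to strip the common power of a uniformizer from a local homogeneous representative, then patch via separatedness of $\C\PP^m$.

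For invariance you supply what the paper omits: form $F'':=\sigma_m\circ F'\circ\sigma_n|_Z$, note that in coordinates this conjugates the coefficients of any polynomial representative (so $F''$ is again regular, even though $\sigma$ is anti-holomorphic rather than a morphism of complex varieties), and invoke uniqueness of extensions from a dense open to conclude $F''=F'$. The implicit claim that $\operatorname{dom}(F)$ is itself $\sigma$-invariant, needed so that the comparison there is legitimate, follows from the same coefficient-conjugation observation applied to $F$.

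Your closing remark---that one-dimensionality is essential and that on surfaces one must instead blow up---correctly anticipates the role of this lemma relative to the resolution-of-indeterminacy discussion that follows.
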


One of the main tools is the resolution of indeterminacy of an invariant rational map. We provide a careful presentation of this well-known tool taking care of invariance.

\subsection{Resolution of indeterminacy of an invariant rational map.}\label{res}
Let $Z_0\subset\C\PP^n$ be an invariant non-singular projective variety of dimension $d$ and
$$
F_\C:=(F_1:\cdots:F_m):Z_0\dashrightarrow\C\PP^m
$$ 
an invariant rational map. To compute the \em set of indeterminacy \em of $F_\C$ one proceeds as follows \cite[III.1.4]{sh1}. Consider for each $i=0,\ldots,m$ the divisor $D_i$ in $Z_0$ defined by $F_i$ and let $|D|:=\hcd\{D_0,D_1,\ldots,D_m\}$ be the highest common divisor of the divisors $D_0,D_1,\ldots,D_m$. The divisors $D_i':=D_i-|D|$ are relatively prime. By \cite[III.1.4.Thm.2]{sh1} the map $F_\C$ fails to be regular exactly at the points of the invariant set $Y_\C:=\bigcap_{i=0}^m\supp(D_i')$, which has dimension $\leq d-2$. As $F_\C$ is invariant, it can be restricted to a real rational map $F_\R:Z_0\cap\R\PP^n\dashrightarrow\R\PP^m$ whose set of indeterminacy is $Y_\R:=Y_\C\cap\R\PP^2$. 

We assume that $Z_0$ has dimension $2$. As it is well-known, $F_\C:Z_0\dashrightarrow\C\PP^m$ admits an invariant resolution. Namely,

\paragraph{}\label{clue} There exist:
\em\begin{itemize}
\item[(i)] An invariant non-singular projective surface $Z_1\subset\C\PP^k$ for some $k\geq2$. 
\item[(ii)] An invariant (composition of a) sequence of blow-ups $\pi_\C:Z_1\to Z_0\subset\C\PP^n$ such that $\pi_\C|_{Z_1\setminus\pi_\C^{-1}(Y_\C)}:Z_1\setminus\pi_\C^{-1}(Y_\C)\to Z_0\setminus Y_\C$ is a biregular isomorphism and 
$$
Y_\C=\{p\in Z_0:\ \#\pi_\C^{-1}(p)>1\}.
$$
\item[(iii)] An invariant regular map $\widehat{F}_\C:Z_1\to\C\PP^m$ such that 
$$
\widehat{F}_\C|_{Z_1\setminus\pi_\C^{-1}(Y_\C)}=F_\C\circ\pi_\C|_{Z_1\setminus\pi_\C^{-1}(Y_\C)}.
$$
\end{itemize}

In addition, for each $y\in Y_\C$ the irreducible components of $\pi_\C^{-1}(y)$ are non-singular projective curves $K_{i,y}$ that are biregularly equivalent to $\C\PP^1$ (via regular maps $\Phi_{i,y}:\C\PP^1\to K_{i,y}$ that are invariant for invariant $K_{i,y}$) and satisfy:
\begin{itemize}
\item[(iv)] If $y\in Y_\C\setminus Y_\R$, then $\sigma(K_{i,y})=K_{i,\sigma(y)}$ and $K_{i,y}\cap\R\PP^k=\varnothing$,
\item[(v)] If $y\in Y_\R$, then either $K_{i,y}\cap\R\PP^k=\varnothing$ and there exists $j\neq i$ such that $\sigma(K_{i,y})=K_{j,y}$ or $\sigma(K_{i,y})=K_{i,y}$ and $C_{i,y}=K_{i,y}\cap\R\PP^k$ is a non-singular projective curve biregularly equivalent to $\R\PP^1$ (via $\phi_{i,y}:=\Phi_{i,y}|_{\R\PP^1}:\R\PP^1\to C_{i,y}$).
\end{itemize} 
\em A triple $(Z_1,\pi_\C,\widehat{F}_\C)$ satisfying the previous properties is an \em invariant resolution for $F_\C$\em.

Let us recall some terminology and results concerning blow-ups of non-singular projective varieties at non-singular centers, from which \ref{clue} follows readily.

\subsection{Blow-up with a non-singular variety as center}\label{blu}\setcounter{paragraph}{0} 

Let $Z_0\subset\C\PP^n$ be a non-singular irreducible projective variety and $Y\subset Z_0$ a non-singular subvariety. Let $H_1,\ldots,H_m$ be a system of homogeneous polynomials of the same degree that generates an ideal $I$ whose saturation
$$
\ol{I}:=\{H\in\C[\z]:=\C[\z_0,\z_1,\ldots,\z_n]:\ (\z)^kH\subset I\quad\text{for some $k\geq0$}\}
$$
equals the ideal ${\mathcal J}(Y)$ of (homogeneous) polynomials of $\C[\z]$ vanishing identically on $Y$.

\paragraph{}\label{propblu} The \em blow-up $\Bl_{Y}(Z_0)$ of $Z_0$ with center $Y$ \em is the closure in $Z_0\times\C\PP^{m-1}$ of the set
$$
\{(z;(H_1(z):\cdots:H_m(z)))\in (Z_0\setminus Y)\times\C\PP^{m-1}\}
$$
together with the projection $\pi:\Bl_{Y}(Z_0)\subset Z_0\times\C\PP^{m-1}\to Z_0,\ (z;y)\mapsto z$ (see \cite[7.18]{ha} and \cite[VI.2.2]{sh2}). Recall the following facts: 
\begin{itemize}
\item[$\bullet$]$\Bl_{Y}(Z_0)$ is a non-singular irreducible projective variety of the same dimension as $Z_0$, independent of the choices made in the process.
\item[$\bullet$]$\pi^{-1}(Y)$ is a non-singular hypersurface of $\Bl_{Y}(Z_0)$.
\item[$\bullet$]$\pi|_{\Bl_{Y}(Z_0)\setminus\pi^{-1}(Y)}:\Bl_{Y}(Z_0)\setminus\pi^{-1}(Y)\to Z_0\setminus Y$ is a biregular isomorphism.
\item[$\bullet$]$Y$ admits a finite cover by affine open subsets $\{U_\alpha\}_\alpha$ satisfying $\pi^{-1}(U_\alpha)\cong U_\alpha\times\C\PP^{r-1}$ where $r:=\dim_\C(Z_0)-\dim_\C(Y)$. In particular, the fiber of each $y\in Y$ is a projective space $\C\PP^{r-1}$.
\item[$\bullet$]If $Y_1,\ldots,Y_r$ are the irreducible components of $Y$, then they are pairwise disjoint and non-singular and it holds
$$
\Bl_Y(Z_0)\cong\Bl_{Y_r}(\cdots\Bl_{Y_2}(\Bl_{Y_1}(Z_0))\cdots).
$$
\end{itemize}
If $Z_0,Y$ are invariant, $\Bl_{Y}(Z_0)$ can be assumed invariant, too, by choosing an ideal $I$ with saturation ${\mathcal J}(Y)$ and whose generators are invariant (given any family of generators, consider the real and the imaginary parts of all of them). If we consider the immersion of $\Bl_{Y}(Z_0)$ in some $\C\PP^N$ using Segre's map, also the regular map $\pi:\Bl_{Y}(Z_0)\to Z_0$ is invariant. 

\paragraph{}\label{curve2} Assume that $Z_0$ is an invariant non-singular projective surface and $Y$ is a finite invariant subset. Consider the invariant blow-up $(\Bl_{Y}(Z_0),\pi)$ of $Z_0$ with center $Y$:

\indent (i) For each $y\in Y$ the fiber $\pi^{-1}(y)$ is a $\C\PP^1$. If $y\in Y\cap\R\PP^d$, there exists an invariant biregular equivalence between $\pi^{-1}(y)$ and $\C\PP^1$. If $y\in Y\setminus\R\PP^d$, then $\pi^{-1}(y)\cap\R\PP^N=\varnothing$.

\indent (ii) If $C\subset Z_0$ is a non-singular curve not contained in the center $Y$, its strict transform 
$$
\widetilde{C}:=\cl_{\Bl_{Y}(Z_0)}(\pi^{-1}(C\setminus Y))\subset\pi^{-1}(C)
$$ 
is as well a non-singular curve. In addition, if there exists an invariant biregular equivalence $\Phi:\C\PP^1\to C$, the strict transform $\widetilde{C}\subset\pi^{-1}(C)$ of $C$ under $\pi$ is invariant and there exists an invariant biregular equivalence $\Psi:\C\PP^1\to\widetilde{C}$.

\begin{proof}[Sketch of proof of statement \em \ref{clue}]
To solve the indeterminacy of the rational map $F_\C:Z_0\dashrightarrow\C\PP^m$, one blows the set of points of indeterminacy $Y_\C$ of $F_\C$ up and considers the composition $G$ of $F_\C$ with the previous sequence of blowing-ups next \cite[IV.3.3.Thm.3]{sh1}. If $G$ is regular, the process is concluded. Otherwise one applies the previous procedure to $G$. In finitely many steps one achieves a regular map and the process finishes. By \ref{blu} we may assume that each rational map involved in the process is invariant, so in the last step of the process we obtain: 
\begin{itemize}
\item An invariant non-singular projective surface $Z_1\subset\C\PP^k$ for some $k\geq2$, 
\item An invariant sequence of blow-ups $\pi_\C:Z_1\to Z_0\subset\C\PP^n$ and
\item An invariant regular map $\widehat{F}_\C:Z_1\to\C\PP^m$ such that 
$$
\widehat{F}_\C|_{Z_1\setminus\pi_\C^{-1}(Y_\C)}=F_\C\circ\pi_\C|_{Z_1\setminus\pi_\C^{-1}(Y_\C)}.
$$
\end{itemize}
The triple $(Z_1,\pi_\C,\widehat{F}_\C)$ satisfies conditions (i) to (iii). The fiber of each point of $Y_\C$ under $\pi$ is a complex projective curve by \ref{blu} whose irreducible components are non-singular rational curves while the fiber of each point of $Z_0\setminus Y_\C$ under $\pi$ is a singleton. Thus, $Y_\C$ is the fundamental set of $\pi_\C$, so (ii) holds. Assertions (iv) and (v) are straightforward consequences of \ref{blu}.
\end{proof}

\subsection{Projective curves intersecting each other in a singleton.}\setcounter{paragraph}{0}
It will be useful to know sufficient conditions that guarantee that the intersection of two connected complex projective curves of a complex projective surface is either empty or a singleton. The proof of the following result is deeply inspired by the proofs of \cite[4.6]{j1} and \cite[3.1]{j2}.

\begin{lem}\label{crack}
Let $X$ be a complex projective surface. Assume that: 
\begin{itemize}
\item[$\bullet$]$U\subset X$ is a connected orientable manifold such that 
$$
H_1(U;\Z)=H_2(U;\Z)=0,
$$
\item[$\bullet$]$U$ is dense in $X$ and the complement $A:=X\setminus U$ is a complex projective curve. 
\end{itemize}
Let $C_1,C_2\subset A$ be two connected, complex projective curves without common irreducible components. Then the intersection $C_1\cap C_2$ is either the empty set or a singleton.
\end{lem}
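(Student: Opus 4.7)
The plan is to argue by contradiction, following the schemes of \cite[4.6]{j1} and \cite[3.1]{j2}. Assume $C_1\cap C_2$ contains distinct points $p,q$. Since $C_1,C_2$ share no irreducible component, $C_1\cap C_2$ is finite, and since each $C_i$ is a connected (hence path-connected) complex projective curve, I would first choose topological arcs $\alpha_i\subset C_i$ from $p$ to $q$, running through the smooth locus of $C_i$ and avoiding the other points of $C_1\cap C_2$, so that $\gamma:=\alpha_1\cdot\alpha_2^{-1}$ is a $1$-cycle contained in $C_1\cup C_2\subset A$.

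Next, using that at smooth points of $C_1\cup C_2$ the curve $A$ has real codimension $2$ in $X$ with a complex normal direction pointing into $U$, I would perturb $\gamma$ transversely off $A$ to a loop $\widetilde\gamma\subset U$, sweeping out a topological cylinder $\Sigma_0$ with $\partial\Sigma_0=\widetilde\gamma-\gamma$. The first hypothesis $H_1(U;\Z)=0$ furnishes a singular $2$-chain $D\subset U$ with $\partial D=\widetilde\gamma$; consequently $\gamma$ bounds a $2$-chain in $X$ as well, and a suitable combination $\Sigma:=D-\Sigma_0-E$ (with $E$ a $2$-chain in $X$ bounding $\gamma$) is a closed $2$-cycle representing a class $[\Sigma]\in H_2(X;\Z)$.

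The key step is to compare $[\Sigma]\cdot[C_j]$ for $j=1,2$ in two ways. On one side, $D\subset U$ is disjoint from $C_j\subset A$ and contributes $0$, while the auxiliary pieces $\Sigma_0$ and $E$ can be chosen so that their intersection with $C_j$ is controlled and small. On the other side, the second hypothesis $H_2(U;\Z)=0$ allows one to push $\Sigma$ into a tubular neighborhood of $A$ and represent it as a combination of fundamental classes of irreducible components of $A$; the positivity of intersection of distinct complex projective curves on a complex projective surface (every point of $C_1\cap C_2$ contributes a positive integer to $[C_1]\cdot[C_2]$) then gives a lower bound proportional to $\#(C_1\cap C_2)$. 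Comparing the two computations contradicts $\#(C_1\cap C_2)\geq 2$.

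The main obstacle is making the intersection-number calculation rigorous when $X$, $C_1$, or $C_2$ carry singularities, particularly at $p$ and $q$ themselves. A natural remedy is to first resolve the singularities of the pair $(X,A)$ so that the proper transform of $A$ becomes a simple normal crossings divisor on a smooth projective surface $\widetilde{X}$, lift the homological hypotheses $H_1(U;\Z)=H_2(U;\Z)=0$ to $\widetilde{U}:=\pi^{-1}(U)$ (using that blow-ups at centers contained in $A$ leave $U$ and its homology essentially invariant), and then carry out the intersection-theoretic argument on $\widetilde{X}$ using its standard intersection form together with the positivity of complex intersections.
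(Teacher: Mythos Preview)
Your route via intersection theory is genuinely different from the paper's argument, but as written it has real gaps rather than just routine omissions.

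First, the cycle $\Sigma:=D-\Sigma_0-E$ is ill-defined. You already observe that $\partial(D-\Sigma_0)=\gamma$, so $D-\Sigma_0$ itself is an admissible choice of $E$; with that choice $\Sigma=0$ and there is nothing to intersect. More seriously, the class $[\Sigma]\in H_2(X;\Z)$ depends on the choice of $E$, and you never explain which choice produces a nonzero pairing with $[C_j]$ or why the two computations you announce actually disagree. The sentence ``the second hypothesis $H_2(U;\Z)=0$ allows one to push $\Sigma$ into a tubular neighborhood of $A$ and represent it as a combination of fundamental classes of irreducible components of $A$'' is doing all the work and is not justified: $H_2(U;\Z)=0$ tells you about cycles supported in $U$, not about arbitrary classes in $H_2(X;\Z)$, and $X$ may be singular so there is no intersection form to begin with. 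Your proposed fix---resolving $(X,A)$ to a simple normal crossings pair---introduces exceptional components, and the proper transforms of $C_1,C_2$ need no longer be connected nor share the original intersection pattern, so the very hypotheses you want to exploit may be lost upstairs.

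The paper bypasses all of this with a short cohomological argument and never touches intersection numbers. Lefschetz duality gives $H^i(X,A;\Z)\cong H_{4-i}(U;\Z)$; the long exact sequence of the pair together with $H_1(U;\Z)=H_2(U;\Z)=0$ yields $H^1(A;\Z)=0$. A Mayer--Vietoris argument then shows $H^1(C;\Z)=0$ for every projective curve $C\subset A$, in particular for $C_1\cup C_2$. Finally, reduced Mayer--Vietoris applied to open neighborhoods of $C_1$ and $C_2$ inside $C_1\cup C_2$ forces $\tilde H^0(C_1\cap C_2;\Z)=0$, so the finite set $C_1\cap C_2$ is connected, i.e.\ a point. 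This uses only that $A$ and the $C_i$ are locally contractible, so singularities cause no difficulty.
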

\begin{proof}
The proof is conducted in several steps:

\paragraph{} We prove first: $H^{1}(A;\Z)=0$.

Assume that $X$ is a compact polyhedron of dimension $4$ and $A$ a closed subpolyhedron of $X$. As $U$ is an orientable real manifold of dimension $4$, by Lefschetz duality \cite[6.1.11 \& 6.2.19]{sp} we have $H^{i}(X,A;\Z)\cong H_{4-i}(U;\Z)$ for $i=0,\ldots,4$. By the long exact sequence of cohomology \cite[5.4.13]{sp}
$$
H^1(X,A;\Z)\to H^1(X;\Z)\to H^{1}(A;\Z)\to H^2(X,A;\Z)\cong H_2(U;\Z)=0,
$$
so $H^1(X;\Z)\to H^{1}(A;\Z)$ is an epimorphism. As $U$ is a connected open dense subset of $X$, there exists by \cite[4.7]{j1} an epimorphism $0=H_1(U;\Z)\to H_1(X,\Z)$, so $H_1(X,\Z)=0$. By the universal-coefficient theorem for cohomology \cite[5.5.3]{sp} 
$$
H^1(X,\Z)\cong{\rm Hom}(H_1(X,\Z),\Z)\oplus{\rm Ext}(\Z,\Z),
$$
so by \cite[5.5.1]{sp} $H^{1}(X;\Z)=0$. Consequently, $H^{1}(A;\Z)=0$.

\paragraph{}\label{retract}\em Let $C\subset A$ be a projective algebraic curve. Then $H^1(C;\Z)=0$\em. In particular, it holds $H^1(C_1\cup C_2;\Z)=0$.

Let $C'$ be the union of the irreducible components of $A$ not contained in $C$. Clearly, ${\mathcal F}:=C\cap C'$ is a finite set. As $C$ and $C'$ are analytic sets, they are locally contractible, so for each $x\in{\mathcal F}$ there exists a neighborhood $V^x$ in $A$ such that: 
\begin{itemize}
\item[$\bullet$]$V^x\cap C$ and $V^x\cap C'$ have the singleton $\{x\}$ as a deformation retract, 
\item[$\bullet$]$V^x\cap C\cap C'=\{x\}$ and 
\item[$\bullet$]$V^{x_1}\cap V^{x_2}=\varnothing$ if $x_1,x_2\in{\mathcal F}$ and $x_1\neq x_2$. 
\end{itemize}
It holds that $V:=C\cup\bigcup_{x\in{\mathcal F}}(V^x\cap C')$ and $W:=C'\cup\bigcup_{x\in{\mathcal F}}(V^x \cap C)$ are open subsets of $A$ such that $V\cup W=A$, $V\cap W=\bigcup_{x\in{\mathcal F}}(V^x \cap (C\cup C'))$ and $C,C'$ are respective deformation retracts of $V$ and $W$. In addition, ${\mathcal F}$ is a deformation retract of $V\cap W$. By Mayer-Vietoris' exact sequence for cohomology \cite[5.4.9]{sp}
\begin{multline*}
0=H^1(A;\Z)=H^1(V\cup W;\Z)\to H^1(V;\Z)\oplus H^1(W;\Z)\\
\to H^1(V\cap W;\Z)\cong H^1({\mathcal F};\Z).
\end{multline*}
As ${\mathcal F}$ is a finite set, $H^1({\mathcal F};\Z)=0$, so $H^1(C;\Z)\cong H^1(V;\Z)=0$. 

\paragraph{}\em$C_1\cap C_2$ is either empty or a singleton\em.

Assume $C_1\cap C_2\neq\varnothing$. Let $V_1,V_2$ be two open subsets of $C_1\cup C_2$ such that 
\begin{itemize}
\item[$\bullet$]$C_i\subset V_i$ is a deformation retract of $V_i$ for $i=1,2$, 
\item[$\bullet$]$V_1\cup V_2=C_1\cup C_2$ and $C_1\cap C_2$ is a deformation retract of $V_1\cap V_2$.
\end{itemize}
(for the construction of $V_1,V_2$ proceed similarly to \ref{retract}). By Mayer-Vietoris' exact sequence for reduced cohomology \cite[5.4.8 \& p.240]{sp} applied to the open subsets $V_1$ and $V_2$ of $C_1\cup C_2$ (whose intersection $V_1\cap V_2\supset C_1\cap C_2\neq\varnothing$), we deduce
\begin{multline*}
\tilde{H}^{0}(C_1;\Z)\oplus\tilde{H}^{0}(C_2;\Z)\cong\tilde{H}^{0}(V_1;\Z)\oplus\tilde{H}^{0}(V_2;\Z)\\
\to\tilde{H}^{0}(V_1\cap V_2;\Z)\cong\tilde{H}^{0}(C_1\cap C_2;\Z)\to\tilde{H}^1(V_1\cup V_2;\Z)\\
\cong\tilde{H}^1(C_1\cup C_2;\Z)\cong H^1(C_1\cup C_2;\Z)=0. 
\end{multline*}
As $C_1,C_2$ are connected, $\tilde{H}^{0}(C_i;\Z)=0$, so $\tilde{H}^{0}(C_1\cap C_2;\Z)=0$. Thus, the finite set $C_1\cap C_2$ is connected, so it is a singleton.
\end{proof}
\begin{sexample}\label{bouquet}\em
Let ${\mathcal F}\subset\C^2$ be a finite set and $U:=\C^2\setminus{\mathcal F}$ its complement. Then $H_1(U,\Z)=H_2(U,\Z)=0$. 

By Hurewicz's theorem $H_1(U,\Z)$ is the abelianization of $\pi_1(U)=0$, so $H_1(U,\Z)=0$. We identify $\C^2\equiv\R^4$. To compute $H_2(U,\Z)$, we may assume ${\mathcal F}:=\{p_1,\ldots,p_r\}$ where $p_k:=(2k-1,0,0,0)$. Notice that $D_r:=\bigcup_{i=1}^r\sph^3_{p_i}$ where $\sph^3_{p_i}:=\{x\in\R^4:\ \|x-p_i\|=1\}$ is a deformation retract of $U\equiv\R^4\setminus{\mathcal F}$. Observe 
$$
\sph^3_{p_i}\cap\sph^3_{p_j}=\begin{cases}
\{p_{ij}:=\frac{p_i+p_j}{2}\}&\text{if $|i-j|=1$},\\
\varnothing&\text{if $|i-j|>1$}.
\end{cases}
$$
Denote $D_k:=\bigcup_{i=1}^k\sph^3_{p_i}$ and observe $H_2(D_1;\Z)=H_2(\sph^3_{p_1};\Z)=0$. By induction hypothesis, we assume $H_2(D_{r-1};\Z)=0$. By Mayer-Vietoris' exact sequence for homology \cite[\S4.6]{sp}
\begin{multline*}
0=H_2(D_{r-1},\Z)\oplus H_2(\sph^3_{p_r};\Z)\to H_2(D_r;\Z)=H_2(U;\Z)\\
\to H_1(D_{r-1}\cap\sph^3_{p_r},\Z)=H_1(\{p_{r-1,r}\};\Z)=0,
\end{multline*}
so $H_2(U;\Z)=0$, as required.\qed
\end{sexample}

\begin{sremark}\label{crackc}\em
Lemma \ref{crack} applies if $U$ is homeomorphic to the complement in $\C^2$ of a finite subset. 
\end{sremark}

\subsection{Rational curve selection lemma.}\setcounter{paragraph}{0}
To finish this section we present the following variation of the curve selection lemma adapted to the situations we will approach later. We refer the reader to \cite[4.7]{jk} for a result of similar nature.

\begin{lem}\label{polcurve}
Let $f:\R^n\to\R^m$ be a regular map and $S:=f(\R^n)$. Let $S'\subset S$ be a semialgebraic dense subset of $S$ and $p\in\cl_{\R\PP^m}(S)\setminus S$. Then there exist (after reordering the variables of $\R^n$) a rational path $\alpha:=(\pm\t^{k_1},\t^{k_2}{\tt p}_2,\ldots,\t^{k_n}{\tt p}_n)\in\R(\t)^n$ where
\begin{itemize}
\item[$\bullet$]$k_i\in\Z$, $k_1=\min\{k_1,\ldots,k_n\}<0$, 
\item[$\bullet$]${\tt p}_i\in\R[\t]$ and ${\tt p}_i(0)\neq 0$ for $i=2,\ldots,n$ 
\end{itemize} 
and an integer $r\geq1$ such that for each $\beta\in(\t)^r\R[\t]^n$
\begin{itemize} 
\item[(i)] $p=\lim_{t\to0^+}(f\circ(\alpha+\beta))(t)$ and
\item[(ii)] $(f\circ(\alpha+\beta))(t)\subset S'$ for $t>0$ small enough.
\end{itemize}
\end{lem}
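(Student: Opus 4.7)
The plan is to obtain $\alpha$ by taking a sufficiently fine rational truncation of a semialgebraic arc in $f^{-1}(S')$ that escapes to infinity while its $f$-image tends to $p$, and then to verify that perturbations $\beta$ of high enough order preserve both the limit at $p$ and the property of lying in $S'$.

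First I would produce such an arc. Consider the semialgebraic set
$$
\Gamma:=\{(x,f(x))\in\R^n\times\R^m:\ f(x)\in S'\}
$$
and its closure $X$ in $\R\PP^n\times\R\PP^m$ (affinely immersed in some $\R^L$ via \cite[3.4.4]{bcr}). Since $S'$ is dense in $S$ and $p\in\cl_{\R\PP^m}(S)$, there is a sequence in $\Gamma$ whose second coordinate tends to $p$; as $p\notin S$, its first coordinates must escape to infinity in $\R^n$, for otherwise a convergent subsequence would yield a preimage of $p$ by continuity of $f$. Hence $X$ contains a point $(q_\infty,p)$ with $q_\infty\in\mathsf{H}_\infty(\R\PP^n)$, and the semialgebraic curve selection lemma \cite{bcr} delivers a semialgebraic continuous arc $\gamma:(0,\veps)\to\R^n$ with $\gamma(t)\to q_\infty$ in $\R\PP^n$, $f(\gamma(t))\in S'$, and $f(\gamma(t))\to p$ in $\R\PP^m$ as $t\to 0^+$.

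Next I would put $\gamma$ into Laurent normal form. Each $\gamma_i$ admits a convergent Puiseux expansion at $t=0^+$, so after replacing $t$ by $t^N$ for a suitable $N\in\N$ I may assume $\gamma_i(t)=\sum_{j\geq k_i}c_{i,j}t^j$ with $c_{i,k_i}\neq 0$. Since $\gamma(t)\to q_\infty\in\mathsf{H}_\infty$, some $k_i<0$, and after reordering the variables of $\R^n$ I may take $k_1=\min_ik_i<0$. The substitution $t\mapsto u(t)$ implicitly determined by $u(t)^{k_1}=\gamma_1(t)/\veps_1$ with $\veps_1:=\mathrm{sign}(c_{1,k_1})\in\{+1,-1\}$ is well-defined and invertible-analytic near $0$ since $\gamma_1(t)/\veps_1=|c_{1,k_1}|t^{k_1}(1+O(t))$ is positive for small $t>0$; in the new parameter $\gamma_1(t)=\veps_1 t^{k_1}$ exactly, while the orders $k_i$ of the remaining components are preserved. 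For an integer $r\geq 1$ to be fixed below, let ${\tt p}_i\in\R[\t]$ be the truncation of the series $\sum_{j\geq 0}c_{i,k_i+j}\t^j$ to degree $<r-k_i$ for $i\geq 2$, and set $\alpha_i(\t):=\t^{k_i}{\tt p}_i(\t)$, $\alpha_1(\t):=\veps_1\t^{k_1}$. Then ${\tt p}_i(0)=c_{i,k_i}\neq 0$ for $i\geq 2$, and $\gamma_i(t)-\alpha_i(t)=O(t^r)$ in each component.

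To prove (i), take any $\beta\in(\t)^r\R[\t]^n$; then $(\alpha+\beta)(t)-\gamma(t)$ is $O(t^r)$ componentwise and a routine Laurent estimate yields, for every $Q\in\R[\x]$,
$$
Q((\alpha+\beta)(t))-Q(\gamma(t))=O(t^{r+k_1(\deg Q-1)}).
$$
Writing $f_j=g_j/h_j$ with $h_j$ nowhere vanishing and choosing $r$ large in terms of $\max_j\deg g_j,\max_j\deg h_j,|k_1|$ and the leading Laurent orders of $g_j\circ\gamma$ and $h_j\circ\gamma$, the numerator and denominator of $f_j\circ(\alpha+\beta)$ share the leading Laurent terms of their counterparts along $\gamma$. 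Hence $f\circ(\alpha+\beta)$ and $f\circ\gamma$ have the same $t\to 0^+$ limit in $\R\PP^m$, namely $p$.

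Finally for (ii), since $S\setminus S'$ is semialgebraic of dimension strictly less than $\dim S$, it is contained in the real zero set of some nonzero $P\in\R[\y_1,\ldots,\y_m]$. As $\gamma(t)\in f^{-1}(S')$, the convergent Laurent series $P(f\circ\gamma)(t)$ is not identically zero and has a well-defined leading term $c\,t^K$ with $c\neq 0$. Enlarging $r$ further if necessary so that the matching argument of the previous paragraph applies also to the finitely many polynomials arising from $P\circ f$ (after clearing denominators), the series $P(f\circ(\alpha+\beta))(t)$ retains the leading term $c\,t^K$ and is nonzero for $0<t\ll 1$; consequently $f((\alpha+\beta)(t))\notin\{P=0\}\supset S\setminus S'$, and since $f((\alpha+\beta)(t))\in S$ this forces $f((\alpha+\beta)(t))\in S'$. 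The main technical obstacle throughout is precisely the bookkeeping of $r$: one must track the leading Laurent orders of the finitely many polynomial expressions arising from $g_j$, $h_j$ and $P$ evaluated along $\gamma$, and choose $r$ strictly larger than each of them. Once that threshold is pinned down, both (i) and (ii) reduce to the same observation that composing a regular map with a sufficiently fine rational truncation of a Puiseux arc preserves the leading Laurent behavior.
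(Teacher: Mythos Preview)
Your overall strategy matches the paper's, and the construction of $\alpha$ together with the verification of (i) are correct; the Laurent estimate you invoke is essentially the content of the paper's auxiliary Lemma~\ref{order}.

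The gap is in (ii). From $f(\gamma(t))\in S'$ you infer $P(f\circ\gamma)\not\equiv 0$, but this does not follow: the zero set $\{P=0\}$ may strictly contain $S\setminus S'$, and the arc $f\circ\gamma$ can lie entirely in $\{P=0\}\cap S'$. Concretely, take $f=\id:\R^2\to\R^2$, $S'=\R^2\setminus\{(x,0):x\le 0\}$ and $p=(0:1:0)$; every polynomial $P$ vanishing on $S\setminus S'$ is divisible by $\y$, yet the admissible arc $\gamma(t)=(1/t,0)$ lies in $S'$ with $P(f\circ\gamma)\equiv 0$. Hence no nonzero leading term is available and your perturbation argument collapses. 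The paper avoids this by first replacing $S'$ with the open dense subset $S\setminus\cl_{\R^m}(S\setminus S')$, so that $f^{-1}(S')$ is open in $\R^n$; it then traps $\lambda((0,\veps_1))$ inside a single basic open set $\{g_1>0,\ldots,g_q>0\}\subset f^{-1}(S')$, where each $g_i\circ\lambda$ automatically has a \emph{positive} leading term $a_it^{\ell_i}$, and a perturbation of order exceeding $\max\{0,\ell_1,\ldots,\ell_q\}$ preserves all these signs. The point is that strict sign conditions on the \emph{source} are stable under high-order perturbation, while membership in the complement of a hypersurface on the target need not separate $S'$ from $S\setminus S'$ along the chosen arc.
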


Before proving the previous result, we need a technical lemma.
\begin{lem}\label{order}
Let $F\in\R[\x]$ be a polynomial that is not identically zero and let $g\in\R((\t))^n$. Then for each $s\geq1$ there exists $r\geq1$ such that if $h\in(\t)^r\R[[\t]]^n$, we have $F(g)-F(g+h)\in(\t)^s\R[[\t]]$.
\end{lem}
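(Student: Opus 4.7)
The strategy is to run a Taylor expansion of $F$ and control the $\t$-adic order of each term that appears. Since $F$ is a polynomial, only finitely many terms arise, and the estimate becomes a matter of choosing $r$ large enough to overwhelm the (bounded below) negative orders that come from the Laurent series $g$.

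First I would write Taylor's formula for $F$: there exist finitely many polynomials $c_\alpha\in\R[\x]$, indexed by multi-indices $\alpha\in\N^n$ with $1\le|\alpha|\le\deg F$, such that
\[
F(\y+\z)-F(\y)=\sum_{1\le|\alpha|\le\deg F}c_\alpha(\y)\,\z^\alpha
\]
as an identity in $\R[\y,\z]$. Substituting $\y=g\in\R((\t))^n$ and $\z=h\in(\t)^r\R[[\t]]^n$ gives
\[
F(g+h)-F(g)=\sum_{1\le|\alpha|\le\deg F}c_\alpha(g)\,h^\alpha
\]
in $\R((\t))$, a \emph{finite} sum whose behaviour I can control term by term.

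Next I would estimate orders. For each $\alpha$ appearing above, $c_\alpha(g)\in\R((\t))$ has some $\t$-adic order $m_\alpha\in\Z\cup\{+\infty\}$; discarding the (finitely many) indices where $c_\alpha(g)=0$, let
\[
M:=\min\{m_\alpha:1\le|\alpha|\le\deg F,\ c_\alpha(g)\ne 0\}\in\Z
\]
(the case in which every $c_\alpha(g)$ vanishes is trivial, since then $F(g+h)=F(g)$ for all $h$). Because each component $h_i$ lies in $(\t)^r\R[[\t]]$, every monomial $h^\alpha$ with $|\alpha|\ge 1$ lies in $(\t)^{r|\alpha|}\R[[\t]]\subset(\t)^r\R[[\t]]$. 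Consequently each term $c_\alpha(g)h^\alpha$ has $\t$-adic order at least $M+r$, and therefore
\[
F(g+h)-F(g)\in\t^{\,M+r}\R[[\t]].
\]
It suffices to choose $r:=\max\{1,\,s-M\}$, which yields $M+r\ge s$ and in particular $M+r\ge 0$, so the difference indeed belongs to $(\t)^s\R[[\t]]$.

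The only real obstacle is conceptual rather than technical: one must notice that although $g$ may have poles (so the individual $c_\alpha(g)$ need not be power series), the minimum $M$ of their orders is bounded below because only finitely many $c_\alpha$ are nonzero. Once this is observed, the polynomial hypothesis on $F$ does all the work and $r$ can be computed explicitly from $s$ and the orders of the $c_\alpha(g)$.
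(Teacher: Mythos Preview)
Your proof is correct and follows essentially the same approach as the paper: expand $F(g+h)-F(g)$ via Taylor's formula and bound the $\t$-adic order of the remainder. The only cosmetic difference is that the paper packages the expansion as $F(\x+\z\y)=F(\x)+\z H(\x,\y,\z)$ with a single auxiliary scalar $\z$, writes $g=g'/\t^{k}$ and $h=\t^{r}h'$, and then reads off the crude but explicit bound $r=s+kd$ (where $d=\deg H$) from the pole order of $H(g'/\t^{k},h',\t^{r})$; you instead use the full multi-index Taylor expansion and take $r=\max\{1,s-M\}$ with $M$ the minimum order among the $c_\alpha(g)$, which is potentially sharper but less explicit.
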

\begin{proof}
Write $g:=\frac{g'}{\t^k}$ where $k\geq 0$ and $g'\in\R[[\t]]^n$. Let $\z$ and $\y:=(\y_1,\ldots,\y_n)$ be variables. Write $F(\x+\z\y)=F(\x)+\z H(\x,\y,\z)$ where $H\in\R[\x,\y,\z]$ is a polynomial of degree $d$. Let $r:=s+kd$ and observe that if $h\in(\t)^r\R[[\t]]^n$, we may write $h:=\t^rh'$ where $h'\in\R[[\t]]$ and
$$
F(g+h)-F(g)=\t^rH\Big(\frac{g'}{\t^k},h',\t^r\Big).
$$
Observe that the order of the series $F(g+h)-F(g)$ is $\geq r-kd=s$, as required.
\end{proof}

\begin{proof}[Proof of Lemma \em\ref{polcurve}]
The proof is conducted in several steps:

\paragraph{} We may assume: \em $S'$ is open in $S$\em.

As $S'$ is dense in $S$ and $S$ is pure dimensional because it is the image of $\R^n$ under a regular map, it holds that $\cl_{\R^{m}}(S\setminus S')$ has dimension $\leq\dim_\R(S)-1$. Thus, $S'':=S\setminus\cl_{\R^m}(S\setminus S')\subset S'$ is dense and open in $S$. Changing $S'$ with $S''$, we may assume that $S'$ is open in $S$.

\paragraph{} \em There exists a Nash path $\lambda:(-1,1)\to\R\PP^n$ such that $(f\circ\lambda)(0,1)\subset S'$, $\lim_{t\to0}\lambda(t)\!=q\in\R\PP^n\setminus\R^n$ and $\lim_{t\to0}(f\circ\lambda)(t)=p$\em.

As $p\in\cl_{\R\PP^m}(S')\setminus S'$, there exists by the Nash curve selection lemma \cite[8.1.13]{bcr} a Nash path $\gamma:(-1,1)\to\R\PP^m$ such that $\gamma((0,1))\subset S'$ and $\gamma(0)=p$. Let $\{x_k\}_k\subset\R^n$ and $\{t_k\}_{k\geq1}\subset(0,1)$ be sequences such that $\lim_{k\to\infty}t_k=0$ and $f(x_k)=\gamma(t_k)$ for all $k\geq1$. We may assume that $\{x_k\}_{k\geq1}$ converges to $q\in\R\PP^n$. As $S=f(\R^n)$ and $p=\gamma(0)\in\cl_{\R\PP^m}(S)\setminus S$, we have $q\in\R\PP^n\setminus\R^n=\mathsf{H}_\infty(\R)$. 

As $\dim_\R(f^{-1}(\gamma((0,1))))\geq\dim_\R(\gamma((0,1)))=1$ and 
$$
q\in\cl_{\R\PP^n}(f^{-1}(\gamma((0,1))))\setminus(f^{-1}(\gamma((0,1)))),
$$ 
there exists a Nash path $\lambda:(-1,1)\to\R\PP^n$ such that $\lambda((0,1))\subset f^{-1}(\gamma((0,1)))$ and $\lambda(0)=q$. 

\paragraph{} \em Construction of the integer $k_1$\em. After reparametrizing $\gamma$, we may assume $f\circ\lambda=\gamma$, so there exist 
$$
\lambda_0,\lambda_1,\ldots,\lambda_n,\gamma_0,\gamma_1,\ldots,\gamma_m\in\R[[\t]]
$$ 
such that 
$$
f\circ\lambda=f\Big(\frac{\lambda_1}{\lambda_0},\ldots,\frac{\lambda_n}{\lambda_0}\Big)=\Big(\frac{\gamma_1}{\gamma_0},\ldots,\frac{\gamma_m}{\gamma_0}\Big)=\gamma,
$$
$\lim_{t\to0^+}(1:\lambda(t))=q$ and $\lim_{t\to0^+}(1:\gamma(t))=p$. As $q\in\mathsf{H}_\infty(\R)$, we may assume after reordering the variables $x_1,\ldots,x_n$ that the order 
$$
k_1:=\omega\Big(\frac{\lambda_1}{\lambda_0}\Big)=\min\Big\{\omega\Big(\frac{\lambda_i}{\lambda_0}\Big):\ i=1,\ldots,n\Big\}<0. 
$$
After a change of the type $\t\mapsto\t u(\t)$ where $u\in\R[[\t]]$ is a unit, we assume $\frac{\lambda_1}{\lambda_0}=\pm\t^{k_1}$ with $k_1<0$. Write 
$$
\frac{\lambda_i}{\lambda_0}=t^{k_i'}\rho_i\quad\text{where }\begin{cases}
\text{$k_i'\in\Z$, $\rho_i\in\R[[\t]]$ and $\rho_i(0)\neq 0$}&\text{if $\frac{\lambda_i}{\lambda_0}\neq0$,}\\
\text{$k_i'=0$ and $\rho_i=0$}&\text{if $\frac{\lambda_i}{\lambda_0}=0$.}
\end{cases}
$$ 

\paragraph{}\em Construction of the integer $r$\em. Write $f:=(\frac{f_1}{f_0},\ldots,\frac{f_m}{f_0})$ where $f_i\in\R[\x]$ and $f_0$ does not vanish on $\R^n$. By Lemma \ref{order} there exists $r_0\geq 1$ such that if $\mu:=(\mu_1,\ldots,\mu_n)\in(\t)^{r_0}\R[[\t]]^n$, then
\begin{multline*}
\lim_{t\to0^+}(1:\lambda(t)+\mu(t))=q\\
\text{and}\quad\lim_{t\to0^+}\Big(1:\frac{f_1}{f_0}(\lambda(t)+\mu(t)):\cdots:\frac{f_m}{f_0}(\lambda(t)+\mu(t))\Big)=p.
\end{multline*}
Since $S'$ is open in $S$, also $f^{-1}(S')$ is open in $\R^n$. As 
$$
\lambda((0,1))\subset f^{-1}(\gamma((0,1)))\subset f^{-1}(S'),
$$ 
there exist finitely many polynomials $g_1,\ldots,g_q\in\R[\x]$ such that 
$$
\lambda((0,\veps_1))\subset\{g_1>0,\ldots,g_q>0\}\subset f^{-1}(S')
$$
for some $\veps_1>0$ small enough. Thus, $g_i\circ\lambda=a_it^{\ell_i}+\cdots$ where $a_i>0$ and $\ell_i\in\Z$. By Lemma \ref{order} there exists $r\geq\max\{r_0,k_2',\ldots,k_n'\}+1$ such that if $\eta\in(\t)^r\R[[\t]]$, then 
$$
(g_i\circ(\lambda+\eta))-(g_i\circ\lambda)\in(\t)^s\R[[\t]]
$$ 
where $s:=\max\{0,\ell_1,\ldots,\ell_q\}+1$. Consequently, if $\eta\in(\t)^r\R[[\t]]$, each series $g_i\circ(\lambda+\eta)>0$ for $t>0$ small enough, so $(\lambda+\eta)(t)\in f^{-1}(S)$ for $t>0$ small enough.

\paragraph{}\em Construction of the rational path $\alpha$\em. Choose $\mu:=(0,\mu_2,\ldots,\mu_n)\in(\t)^r\R[[\t]]^n$ such that $\t^{-k_i'}\,(\frac{\lambda_i}{\lambda_0}+\mu_i)=\rho_i+\t^{-k_i'}\mu_i\in\R[\t]\setminus\{0\}$. Write $\t^{k_i'}(\rho_i+\t^{-k_i'}\mu_i)=\t^{k_i}{\tt p}_i$ where $k_i\in\Z$, ${\tt p}_i\in\R[\t]$ and ${\tt p}_i(0)\neq0$. The rational path $\alpha:=(\pm\t^{k_1},\t^{k_2}{\tt p}_2,\ldots,\t^{k_n}{\tt p}_n)\in\R(\t)^n$ and the integer $r$ satisfy the conditions in the statement.
\end{proof}

\section{Connectedness of the set of points at infinity of a polynomial image}\label{s3}

\subsection{Proof of Theorem \ref{main1}}\label{s3a}\setcounter{paragraph}{0} 

The purpose of this section is to prove Theorem \ref{main1}. We approach this result in the more general framework of \em quasi-polynomial maps\em. Given a regular map 
$$
f:=\Big(\frac{f_1}{f_0},\cdots,\frac{f_m}{f_0}\Big):\R^n\to\R^m
$$ 
where each $f_i\in\R[\x]$, consider the invariant rational map 
$$
F_\C:\C\PP^n\dashrightarrow\C\PP^m,\ x:=(x_0:x_1:\cdots:x_n)\mapsto(F_0(x):F_1(x):\cdots:F_m(x))
$$
where $F_i(\x_0:\x_1:\cdots:\x_n):=\x_0^df_i(\frac{\x_1}{\x_0},\ldots,\frac{\x_n}{\x_0})$ and $d:=\max_{i=0,\ldots,m}\{\deg(f_i)\}$. Let $Y_\C$ be the set of indeterminacy of $F_\C$ and write $F_0=\x_0^eF_0'$ where $e\geq0$ and $F_0'\in\R[\x_0,\x_1,\ldots,\x_n]$ is a homogeneous polynomial that is not divisible by $\x_0$. Observe that $F_\C$ can be restricted to a rational map $F_\R:\R\PP^n\dashrightarrow\R\PP^m$ whose set of indeterminacy is $Y_\R:=Y_\C\cap\R\PP^m$. 

\begin{defn}\label{qp}\em
We say that $f$ is a \em quasi-polynomial map \em if $F_\C(\mathsf{H}^n_\infty(\C))\subset\mathsf{H}_\infty^m(\C)$ and no real indeterminacy point of $F_\C$ belongs to $\{F_0'=0\}$, that is, $Y_\C\cap\R\PP^n\cap\{F_0'=0\}=\varnothing$.
\end{defn}
\begin{remarks}\em
(i) The condition $F_\C(\mathsf{H}^n_\infty(\C))\subset\mathsf{H}^m_\infty(\C)$ is equivalent to 
$$
e:=\max\{\deg(f_1),\ldots,\deg(f_m)\}-\deg(f_0)>0.
$$

(ii) If the polynomials $f_0,f_1,\ldots,f_m$ are relatively prime, then
$$
Y_\C\cap\{F_0'=0\}=\{F_0'=0,F_1=0,\ldots,F_m=0\}.
$$

(ii) Quasi-polynomial maps include polynomial maps, but also more general regular maps. If $e>0$ and $\{F_0'=0\}\cap\R\PP^n=\varnothing$, then $f$ is a quasi polynomial map. This occurs for instance if $e>0$ and $f_0:=b_0^2+(b_1^2\x_1^{2k}+\cdots+b_n^2\x_n^{2k})^\ell$ where $k,\ell\geq0$ and $b_i\in\R\setminus\{0\}$.

(iv) If $n=1$, the condition $e:=\max\{\deg(f_1),\ldots,\deg(f_m)\}-\deg(f_0)>0$ characterizes quasi-polynomial maps.
\end{remarks}

\begin{thm}\label{property}
Let $S\subset\R^m$ be a semialgebraic set that is the image of a quasi-polynomial map $f:\R^n\to\R^m$. Then $S_{\infty}$ is connected. 
\end{thm}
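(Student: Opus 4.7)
The plan is to realize $S_\infty$ as the continuous image of an explicit connected set inside an invariant resolution of indeterminacy of the natural projective extension $F_\C:\C\PP^n\dashrightarrow\C\PP^m$. I would first reduce to the case $n=2$ by interpolating between two rational paths furnished by Lemma~\ref{polcurve}; then, after applying an invariant resolution $(Z_1,\pi_\C,\widehat{F}_\C)$ as in~\ref{clue}, the candidate set will be the full real preimage $E:=\pi_\R^{-1}(\ell_\infty(\R))$ of the line at infinity, and the goal is to check $\widehat{F}_\R(E)=S_\infty$.

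For the reduction, given $p_1,p_2\in S_\infty$ the rational curve selection Lemma~\ref{polcurve} (with $S'=S$) yields rational arcs $\alpha_i\in\R(\t)^n$ and admissible perturbation radii $r_i$ such that $f\circ(\alpha_i+\beta)(t)\to p_i$ for every $\beta\in(\t)^{r_i}\R[\t]^n$. After clearing denominators and choosing a generic polynomial $\sigma:\R^2\to\R^n$ that interpolates between $\alpha_1$ and $\alpha_2$ in a second variable, the composition $g:=f\circ\sigma$ is again quasi-polynomial (the degree gap $e$ is multiplied by $\deg\sigma$, and genericity of $\sigma$ avoids the bad locus $\{F_0'=0\}$) and has $p_1,p_2\in g(\R^2)_\infty$, reducing matters to $n=2$. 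In this setting the quasi-polynomial hypothesis already gives the useful localization $Y_\R\subset\ell_\infty(\R)$: if $y\in Y_\R$ had $y_0\neq 0$, then $F_0(y)=y_0^eF_0'(y)\neq 0$ would contradict $y\in\bigcap_i\supp(D_i')$.

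For the connectedness of $E$, the structure of invariant blow-ups described in \ref{blu}--\ref{curve2} shows that each fiber $\pi_\R^{-1}(y)$ over $y\in Y_\R$ is a tree of real projective lines glued at single points and meeting $\widetilde{\ell_\infty(\R)}$ at one point, while blow-ups at complex-conjugate pairs contribute empty real part; hence $E=\widetilde{\ell_\infty(\R)}\cup\bigcup_{y\in Y_\R}\pi_\R^{-1}(y)$ is connected. The inclusion $S_\infty\subset\widehat{F}_\R(E)$ is a lifting argument: any $p\in S_\infty$ is a limit of $f\circ\alpha$ for some rational arc $\alpha$ with $\alpha(t)\to\infty$ (Lemma~\ref{polcurve}), and lifting $\alpha$ through $\pi_\C$ produces a real analytic arc ending at some $z\in E$ with $\widehat{F}_\R(z)=p$. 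Conversely, on $\widetilde{\ell_\infty(\R)}\setminus\pi_\R^{-1}(Y_\R)$ the extended map $\widehat{F}_\R$ agrees with $F_\R\circ\pi_\R$, and $F_0=\x_0^eF_0'$ with $e\geq 1$ already forces $F_\R(\ell_\infty(\R)\setminus Y_\R)\subset\mathsf{H}_\infty^m(\R)$, which extends to all of $\widehat{F}_\R(\widetilde{\ell_\infty(\R)})$ by continuity and closedness.

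The crux, and the expected main obstacle, is to show $\widehat{F}_\R(\pi_\R^{-1}(y))\subset\mathsf{H}_\infty^m(\R)$ for each $y\in Y_\R$. Locally at $y$ the factorization $F_0=\x_0^eF_0'$ together with $F_0'(y)\neq 0$, combined with the fact that some $F_j$ achieving $\deg f_j=\max_i\deg f_i$ has $\x_0\nmid F_j$, forces the common factor extracted by the resolution to have vanishing order on each exceptional component strictly less than the $e$ coming from $F_0\circ\pi_\C$, so the $0$th coordinate of $\widehat{F}_\C$ must still vanish on $\pi_\C^{-1}(y)$. Where this local divisor count is not decisive, Lemma~\ref{crack} applied in a two-dimensional slice of $\C\PP^m$ (with the topological hypothesis supplied by Example~\ref{bouquet} and Remark~\ref{crackc}) guarantees that each connected complex projective curve $\widehat{F}_\C(\pi_\C^{-1}(y))$ meets $\mathsf{H}_\infty^m(\C)$ either entirely or in a single point --- and in the latter case the point is the attaching one already produced by $\widehat{F}_\C(\widetilde{\ell_\infty(\C)})$. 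This yields $\widehat{F}_\R(E)\subset\mathsf{H}_\infty^m(\R)\cap\cl_{\R\PP^m}(S)=S_\infty$, hence $\widehat{F}_\R(E)=S_\infty$ is a continuous image of a connected set and therefore connected; non-emptiness follows at once since $\ell_\infty(\R)\setminus Y_\R$ has dimension $\geq 1$ (while $\dim_\R Y_\R\leq 0$) and $F_\R$ sends it into $S_\infty$.
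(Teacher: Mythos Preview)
Your candidate set $E=\pi_\R^{-1}(\ell_\infty(\R))$ is the wrong one: the claim $\widehat{F}_\R(\pi_\R^{-1}(y))\subset\mathsf{H}_\infty^m(\R)$ for $y\in Y_\R$ is false in general. Take the polynomial map $f(x,y)=(x,xy)$, so $(F_0:F_1:F_2)=(\x_0^2:\x_0\x_1:\x_1\x_2)$ and $(0:0:1)\in Y_\R$. Resolving in the chart $\x_2=1$ requires two blow-ups; in the chart $(u,v)=(s,st)$ of the second one the map becomes $(1:st:t)$, and the exceptional curve $\{s=0\}$ is sent onto the affine line $\{(1:0:t):t\in\C\}\not\subset\mathsf{H}_\infty(\C)$. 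These are precisely the ``missing'' boundary points $\{0\}\times(\R\setminus\{0\})=\cl_{\R^2}(S)\setminus S$, and the resolution necessarily fills them in through the exceptional fiber. Your divisor-order heuristic (``the common factor has vanishing order strictly less than the $e$ coming from $F_0\circ\pi_\C$'') breaks down after iterated blow-ups, and your fallback via Lemma~\ref{crack} does not help either: even when $\widehat{F}_\C(\pi_\C^{-1}(y))$ meets $\mathsf{H}_\infty(\C)$ in a single point, that image curve still contributes finite points to $\widehat{F}_\R(E)$.

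The paper replaces $E$ by the correct (smaller) set $\widehat{F}_\R^{-1}(\mathsf{H}_\infty(\R))$, for which the equality with $S_\infty$ follows cleanly from properness of $\widehat{F}_\R$. The price is that connectedness of this set is no longer elementary: one first shows $\widehat{F}_\C^{-1}(\mathsf{H}_\infty(\C))$ is connected via Stein factorization of $\widehat{F}_\C$ (the finite part has connected preimage of $\mathsf{H}_\infty$ because its complement is affine, and the part with connected fibers preserves connectedness), and then passes to the real locus using Lemma~\ref{crack} applied inside $Z_1$ (not inside $\C\PP^m$) to force intersections of invariant components to be real singletons. Your outline skips exactly this mechanism. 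As a secondary point, your reduction to $n=2$ is far too loose: ``generic $\sigma$ avoids $\{F_0'=0\}$'' and ``$e$ is multiplied by $\deg\sigma$'' are not arguments, and the paper's reduction constructs the two-variable map explicitly and verifies the quasi-polynomial condition by a careful degree count.
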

\begin{proof}[Proof of Theorem \em\ref{property} \em for the case $n=2$]
Write $f:=(\frac{f_1}{f_0},\cdots,\frac{f_m}{f_0}):\R^2\to\R^m$ where each $f_i\in\R[\x_1,\x_2]$ and $f_0$ has an empty zero set. Keep notations from \ref{s3a} and assume $\gcd(f_0,f_1,\ldots,f_m)=1$, so the set of points of indeterminacy of $F_\K$ (where $\K=\R$ or $\C$) is
$$
Y_\K=\{x_0F_0'=0,F_1=0,\ldots,F_m=0\}\cap\K\PP^2.
$$
By \ref{res} $Y_\K$ is a finite subset of $\ell_\infty(\K)\cup(\{F_0'=0\}\cap\K\PP^2)$. As $f$ is quasi-polynomial, $Y_\R\subset\ell_\infty(\R)\setminus\{F_0'=0\}$. The proof is conducted in several steps:

\vspace{2mm}
\noindent{\bf Step 1}. \em Initial Preparation\em. Let $(Z_1,\pi_\C,\widehat{F}_\C)$ be an invariant resolution for $F_\C$. Keep notations from \ref{clue} and denote 
\begin{itemize}
\item[$\bullet$] $X_1:=Z_1\cap\R\PP^k$, which is a non-singular real projective surface.
\item[$\bullet$] $\pi_\R:=\pi_\C|_{X_1}:X_1\to\R\PP^2$, which is the composition of a sequence of finitely many blow-ups and its restriction $\pi_\R|_{X_1\setminus\pi_\R^{-1}(Y_\R)}:X_1\setminus\pi_\R^{-1}(Y_\R)\to\R\PP^2\setminus Y_\R$ is a biregular isomorphism.
\item[$\bullet$] $\widehat{F}_\R:=\widehat{F}_\C|_{X_1}:X_1\to\R\PP^m$, which is a real regular map and satisfies 
$$
\widehat{F}_\R|_{X_1\setminus\pi_\R^{-1}(Y_\R)}=F_\R\circ\pi_\R|_{X_1\setminus\pi_\R^{-1}(Y_\R)}.
$$
\end{itemize}
As $Y_\R\subset\ell_\infty(\R)$, it holds $\pi_\R(x)\in\R\PP^2\setminus\ell_\infty(\R)\equiv\R^2$ for each point $x\in X_1\setminus\pi_\R^{-1}(\ell_\infty(\R))$. Thus,
$$
\widehat{F}_\R(x)=F_\R\circ\pi_\R(x)=f(\pi_\R(x))\in\R^m\equiv\R\PP^m\setminus\mathsf{H}_\infty(\R),
$$ 
so $\widehat{F}_\R^{-1}(\mathsf{H}_\infty(\R))\subset\pi_\R^{-1}(\ell_\infty(\R))$.

\paragraph{}\label{comp}
The strict transform $\mathsf{K}_\infty$ under $\pi_\C$ of $\ell_\infty(\C)$ is an invariant non-singular rational curve and $\pi_\C|_{\mathsf{K}_\infty}:\mathsf{K}_\infty\to\ell_\infty(\C)\equiv\C\PP^1$ is an invariant biregular isomorphism. Consequently, $\mathsf{C}_\infty:=\mathsf{K}_\infty\cap\R\PP^k$ is a real non-singular rational curve and it is the strict transform under $\pi_\R$ of $\ell_\infty(\R)$.

\paragraph{}\label{E} Define $\mathsf{E}:=\{F_0'(z)=0\}\subset\C\PP^2$ and let $\widetilde{\mathsf{E}}$ be its strict transform under $\pi_\C$, which is an invariant projective curve. As $f_0$ has empty zero set and $\x_0$ does not divide $F_0'$, the intersection $\mathsf{E}\cap\R\PP^2$ is a finite subset of $\ell_\infty(\R)$. In addition $\pi_\C^{-1}(\mathsf{E})=\widetilde{\mathsf{E}}\cup\bigcup_{y\in\mathsf{E}\cap Y_\C}\pi_\C^{-1}(y)$. We have:

\paragraph{}\label{E1} $\widetilde{\mathsf{E}}\cap\bigcup_{y\in Y_\R}\pi^{-1}_\C(y)=\varnothing$ and $\widetilde{\mathsf{E}}\cap\R\PP^k\subset\mathsf{C}_\infty$.

Let $y\in Y_\R$ and let us show $\widetilde{\mathsf{E}}\cap\pi_\C^{-1}(y)=\varnothing$. Otherwise, there exists $x\in\widetilde{\mathsf{E}}\cap\pi_\C^{-1}(y)$, so $y=\pi_\C(x)\in\pi_\C(\widetilde{\mathsf{E}})=\mathsf{E}$. Consequently, $y\in Y_\R\cap\mathsf{E}=Y_\R\cap\{F_0'=0\}=\varnothing$, which is a contradiction. As $\pi_\C(\widetilde{\mathsf{E}}\cap\R\PP^k)\subset\mathsf{E}\cap\R\PP^2\subset\ell_\infty(\R)$, we have
$$
\widetilde{\mathsf{E}}\cap\R\PP^k\subset\pi_{\R}^{-1}(\ell_\infty(\R))\setminus\bigcup_{y\in Y_\R}\pi_\R^{-1}(y)\subset\mathsf{C}_\infty.
$$

\paragraph{}\label{E2} As $\widehat{F}_\C|_{Z_1\setminus\pi_\C^{-1}(Y_\C)}=F_\C\circ\pi_\C|_{Z_1\setminus\pi_\C^{-1}(Y_\C)}$, it holds
$$
\widetilde{\mathsf{E}}\cup \mathsf{K}_\infty\subset \widehat{F}_\C^{-1}(\mathsf{H}_\infty(\C))\subset\widetilde{\mathsf{E}}\cup \mathsf{K}_\infty\cup\pi_\C^{-1}(Y_\C).
$$
Thus, $\widehat{F}_\C^{-1}(\mathsf{H}_\infty(\C))$ is an invariant projective curve whose irreducible components different from $\widetilde{\mathsf{E}}$ are by \ref{clue} either singletons or non-singular rational curves.

\paragraph{}\label{diagram} The following diagram summarizes the achieved situation:

\begin{center}
\begin{tikzpicture}
 \node (e) at (0,3.9) {$\widetilde{\mathsf{E}}\cup\mathsf{K}_\infty$};
 \node (c) at (0,3.4) {$\cap$};
 \node (f) at (-0.2,2.9) {$\widehat{F}_\C^{-1}(\mathsf{H}_\infty(\C))$};
 \node (c2) at (0,2.4) {$\cap$};
	 \node (z1) at (0,1.95) {$Z_1$};
	 \node (cp2) at (3,2) {$\C\PP^2$};
	 \node (c2) at (3,2.4) {$\cap$};
	 \node (hr) at (3,2.9) {$\ell_\infty(\C)\cup\mathsf{E}$};
	 \node (ss) at (4.125,2.92) {$\supset$};
	 \node (hr) at (4.6,2.9) {$Y_\C$};
	 \node (cpm) at (3,0) {$\C\PP^m$};
	 \node (cup) at (3,-0.45) {$\cup$};
	 \node (hr) at (3,-1) {$\mathsf{H}_\infty(\C)$};
\draw [->] (z1.east) to node [above] {$\pi_\C$} (cp2.west);
\draw [->] (z1.south east) to node [left] {$\widehat{F}_\C$} (cpm.north west);
\draw [->] (cp2.south) to node [right] {$F_\C$} (cpm.north);
\end{tikzpicture}
\begin{tikzpicture}
 \node (e) at (0,3.9) {$\widetilde{\mathsf{E}}\cap\R\PP^k$};
 \node (c) at (0,3.4) {$\cap$};
 \node (f) at (0,2.9) {$\mathsf{C}_\infty$};
 \node (c2) at (0,2.4) {$\cap$};
	 \node (z1) at (0,1.95) {$X_1$};
	 \node (cp2) at (3,2) {$\R\PP^2$};
	 \node (c2) at (3,2.4) {$\cap$};
	 \node (hr) at (3,2.9) {$\ell_\infty(\R)$};
	 \node (ss) at (3.75,2.92) {$\supset$};
	 \node (hr) at (4.25,2.9) {$Y_\R$};
	 \node (cpm) at (3,0) {$\R\PP^m$};
	 \node (cup) at (3,-0.45) {$\cup$};
	 \node (hr) at (3,-1) {$\mathsf{H}_\infty(\R)$};
\draw [->] (z1.east) to node [above] {$\pi_\R$} (cp2.west);
\draw [->] (z1.south east) to node [left] {$\widehat{F}_\R$} (cpm.north west);
\draw [->] (cp2.south) to node [right] {$F_\R$} (cpm.north);
\end{tikzpicture}
\end{center}

\noindent{\bf Step 2.} We prove next: 

\paragraph{}\label{usrc}\em $\widehat{F}_\C^{-1}(\mathsf{H}_\infty(\C))$ is connected\em. Consequently, \em $\widehat{F}_\C^{-1}(\mathsf{H}_\infty(\C))$ does not contain isolated points and its irreducible components different from $\widetilde{\mathsf{E}}$ are non-singular rational curves\em.

Indeed, by Stein's factorization theorem \cite[III.11.5]{h2} applied to the projective morphism $\widehat{F}_\C:Z_1\to\C\PP^m$, there exist a projective variety $V$ and projective morphisms $G_1:Z_1\to V$ and $G_2:V\to\C\PP^m$ such that: 
\begin{itemize}
\item[$\bullet$]$G_1$ is surjective and its fibers are connected, 
\item[$\bullet$]$G_2$ is a finite morphism and
\item[$\bullet$]$\widehat{F}_\C=G_2\circ G_1$. 
\end{itemize}
To prove \ref{usrc} it is enough to show that $\mathsf{H}:=G_2^{-1}(\mathsf{H}_\infty(\C))$ and $G_1^{-1}(\mathsf{H})=\widehat{F}_\C^{-1}(\mathsf{H}_\infty(\C))$ are connected.

\paragraph{} \em $\mathsf{H}$ is connected\em. 

As $G_2$ is finite, it is by \cite[II.5.17]{h2} an affine morphism. Thus,
$$
G_2^{-1}(\C^m)=G_2^{-1}(\C\PP^m)\setminus G_2^{-1}(\mathsf{H}_\infty(\C))=V\setminus\mathsf{H}
$$ 
is an affine algebraic variety. As $V$ is a complete manifold, we deduce by \cite[6.2, p.79]{h} that $\mathsf{H}=V\setminus G_2^{-1}(\C^m)$ is connected because it is the complement in $V$ of an affine open subvariety.

\paragraph{} \em $G_1^{-1}(\mathsf{H})$ is connected\em.

Suppose that $G_1^{-1}(\mathsf{H})$ is the disjoint union of two closed subsets $A_1,A_2$. As $G_1$ is proper and surjective, $G_1(A_1),G_1(A_2)$ are closed subsets of $\mathsf{H}$ and $\mathsf{H}=G_1(A_1)\cup G_1(A_2)$. In case $G_1(A_i)\neq\varnothing$ for $i=1,2$, the intersection $G_1(A_1)\cap G_1(A_2)\neq\varnothing$ because $\mathsf{H}$ is connected. If $x\in G_1(A_1)\cap G_1(A_2)$, the fiber 
$$
G_1^{-1}(\{x\})=(G_1^{-1}(\{x\})\cap A_1)\sqcup(G_1^{-1}(\{x\})\cap A_2)
$$ 
is the disjoint union of two non-empty closed sets, so $G_1^{-1}(\{x\})$ is disconnected, which is a contradiction because the fibers of $G_1$ are connected.

\vspace{2mm}
\noindent{\bf Step 3.} In the following we use Lemma \ref{crack} several times. To ease the procedure we point out the key facts. By \ref{curve2} $A:=\pi_\C^{-1}(\ell_\infty(\C)\cup Y_\C)$ is an algebraic curve whose irreducible components are non-singular rational curves. Observe that $U:=Z_1\setminus A=\pi_\C^{-1}(\C^2\setminus Y_\C)$ is a dense subset of $Z_1$ biregularly equivalent to $\C^2\setminus Y_\C$ (that is, the complement in $\C^2$ of a finite subset). 

\paragraph{}\label{ucrack} \em Let $B_1,B_2\subset A$ be two connected compact algebraic curves without common irreducible components. Then the intersection $B_1\cap B_2$ is \em by Lemma \ref{crack}, Example \ref{bouquet} and Remark \ref{crackc} \em either empty or a singleton\em.

\vspace{2mm}
\noindent{\bf Step 4.} By Zariski's Main Theorem \cite[III.11.4]{h2} applied to the birational projective morphism $\pi_\C:Z_1\to\C\PP^2$, \em the fiber $\pi_\C^{-1}(y)$ is connected for each $y\in Y_\C$\em. We prove next: 

\paragraph{}\label{connected2}\em If $y\in Y_\R$, the invariant projective variety $T_y:=\pi_\C^{-1}(y)\cap \widehat{F}_\C^{-1}(\mathsf{H}_\infty(\C))$ is connected and $\mathsf{K}_\infty\cap T_y$ is a singleton\em. In addition, \em if $T_y$ has dimension $1$ and $\mathsf{K}_{1,y},\ldots,\mathsf{K}_{r_y,y}$ are the invariant irreducible components of $T_y$, the projective curve $\bigcup_{i=1}^{r_y}\mathsf{K}_{i,y}$ is connected and 
$$
\mathsf{K}_\infty\cap T_y=\mathsf{K}_\infty\cap\bigcup_{i=1}^{r_y}\mathsf{K}_{i,y}
$$ 
is a singleton contained in $X_1$\em.

The clue to prove \ref{connected2} is the following:

\paragraph{}\label{connected0}\em Let $y\in Y_\R$ and $T$ be an invariant connected union of irreducible components of $\pi^{-1}_\C(y)$. Let $K_1,\ldots,K_r$ be the invariant irreducible components of $T$ and denote $C_i:=K_i\cap\R\PP^k$, which is \em by \ref{clue}(v) \em a real non-singular rational curve for $i=1,\ldots,r$. We have:
\begin{itemize}
\item[(i)] The projective curve $\bigcup_{i=1}^rK_i$ is connected.
\item[(ii)] Suppose moreover $\mathsf{K}_\infty\cap T\neq\varnothing$. Then the intersection $\mathsf{K}_\infty\cap T=\mathsf{K}_\infty\cap\bigcup_{i=1}^rK_i$ is a singleton contained in $X_1$. In particular, $\mathsf{K}_\infty\cup\bigcup_{i=1}^rK_i$ is connected.
\item[(iii)] We may order the indices $i=1,\ldots,r$ in such a way that $\mathsf{C}_\infty\cap C_1=\mathsf{K}_\infty\cap T$ is a singleton and $C_i\cap\bigcup_{j=1}^{i-1}C_j$ is a singleton for $i=2,\ldots,r$. In particular, the real projective curve $C:=\bigcup_{i=1}^rC_i$ is connected and $\mathsf{C}_\infty\cap C\neq\varnothing$.
\end{itemize}\em

We prove first \ref{connected0}(i). If $T=\bigcup_{i=1}^rK_i$, there is nothing to prove. 

Otherwise, denote the non-invariant irreducible components of $T$ with $K_{r+1},\ldots,K_s$. Denote
$$
t:=\max\Big\{\#{\mathcal F}:\ {\mathcal F}\subset\{1,\ldots,r\},\ \bigcup_{i\in{\mathcal F}}K_i\quad\text{is connected}\Big\}
$$
and let us check $t=r$. Suppose by contradiction $t<r$. We may assume that $K:=\bigcup_{i=1}^tK_i$ is connected. As each $K_\ell$ is connected, each intersection $K\cap K_\ell=\varnothing$ for $t<\ell\leq r$. As $T$ is connected and invariant and $K\cap\bigcup_{i=t+1}^rK_i=\varnothing$, we may assume $K\cap K_{r+1}\neq\varnothing$ and $\sigma(K_{r+1})=K_{r+2}$. As $K$ is invariant, $K\cap K_{r+2}\neq\varnothing$, so $K\cup K_{r+1}\cup K_{r+2}$ is connected and invariant. Repeating the previous argument recursively, we find indices $r<r+2j_0\leq s$ and $t<\ell\leq r$ such that (after reordering the indices $i=r+1,\ldots,s$) $K_\ell\cap(K\cup\bigcup_{i=r+1}^{r+{2j_0}}K_j)\neq\varnothing$ and for each $1\leq j\leq j_0$ it holds
\begin{itemize}
\item[$\bullet$]$K\cup\bigcup_{i=r+1}^{r+{2j}}K_i$ is connected and invariant and
\item[$\bullet$]$\sigma(K_{r+2j-1})=K_{r+2j}$.
\end{itemize} 
Such indices $r+2j_0,\ell$ exist because $T=\bigcup_{i=1}^sK_i$ is connected.

The invariant connected projective curves $K_\ell$ and $K\cup\bigcup_{i=r+1}^{r+{2j}}K_j$ are contained in $A:=\pi_\C^{-1}(\ell_\infty(\C)\cup Y_\C)$ because $y\in Y_\R$. The non-empty invariant intersection $K_\ell\cap(K\cup\bigcup_{i=r+1}^{r+{2j}}K_j)$ is by \ref{ucrack} a singleton $\{p_\ell\}\subset X_1=Z_1\cap\R\PP^k$. As $K_i\subset\C\PP^k\setminus\R\PP^k$ for $i=r+1,\ldots,s$ (see \ref{clue}(iv)), we have $p_\ell\in K_\ell\cap K\neq\varnothing$, which is a contradiction. Then $r=t$, so $\bigcup_{i=1}^rK_i$ is connected.

Next we prove \ref{connected0}(ii). Since $\mathsf{K}_\infty,T\subset A$ are invariant connected projective curves, the non-empty invariant intersection $\mathsf{K}_\infty\cap T$ is by \ref{ucrack} a singleton $\{p\}\subset X_1$. Thus, $p\in T\cap\R\PP^k\subset\bigcup_{i=1}^rK_i$ because the non-invariant irreducible components of $T$ are by \ref{clue}(v) contained in $\C\PP^k\setminus\R\PP^k$. Consequently, $\mathsf{K}_\infty\cap T=\mathsf{K}_\infty\cap\bigcup_{i=1}^rK_i=\{p\}$.

Finally, we show \ref{connected0}(iii). As $p\in X_1=Z_1\cap\R\PP^k$, 
$$
\mathsf{C}_\infty\cap C=\mathsf{C}_\infty\cap\bigcup_{i=1}^rC_i=\Big(\mathsf{K}_\infty\cap\bigcup_{i=1}^rK_i\Big)\cap\R\PP^k=\{p\}\neq\varnothing.
$$
We may assume $\mathsf{C}_\infty\cap C_1\neq\varnothing$, that is, $\mathsf{C}_\infty\cap C_1=\{p\}=\mathsf{K}_\infty\cap T$. As $\bigcup_{i=1}^rK_r$ is connected, we claim: \em We may order the indices $i=2,\ldots,r$ in such a way that $K_i$ intersects the union $\bigcup_{j=1}^{i-1}K_j$ for $i=2,\ldots,r$\em. 

Indeed, as $K=\bigcup_{i=1}^rK_i$ is connected, the intersection $K_1\cap\bigcup_{i=2}^rK_i\neq\varnothing$ and we assume $K_1\cap K_2\neq\varnothing$. As $K$ is connected, the intersection of $K_1\cup K_2$ and $\bigcup_{i=3}^rK_i\neq\varnothing$ is non-empty. We may assume that $K_3$ intersects $K_1\cup K_2$ and proceeding this way we prove the claim.

Next, since $K_i$ and $\bigcup_{j=1}^{i-1}K_j$ are invariant connected projective curves contained in $A$, the non-empty invariant intersection $K_i\cap\bigcup_{j=1}^{i-1}K_j$ is by \ref{ucrack} a singleton $\{q_i\}\subset X_1$. Thus, 
$$
q_i\in\R\PP^k\cap K_i\cap\bigcup_{j=1}^{i-1}K_j=C_i\cap\bigcup_{j=1}^{i-1}C_j,
$$
so $C_i\cap\bigcup_{j=1}^{i-1}C_j\neq\varnothing$ for $i=2,\ldots,r$. As each $C_i$ is a non-singular curve biregularly equivalent to $\R\PP^1$, we deduce that the projective curve $C:=\bigcup_{i=1}^rC_i$ is connected.

\paragraph{}\label{connected2p} Now we are ready to prove \ref{connected2}. As $y\in Y_\R\subset\ell_{\infty}(\R)\subset\ell_{\infty}(\C)$ and $\pi_\C(\mathsf{K}_\infty)=\ell_{\infty}(\C)$, we deduce $\pi_\C^{-1}(y)\cap\mathsf{K}_\infty\neq\varnothing$. If $y_1\neq y_2$, the intersection $\pi_\C^{-1}(y_1)\cap\pi_\C^{-1}(y_2)=\varnothing$ and by \ref{E1} $\widetilde{\mathsf{E}}\cap\pi_\C^{-1}(y)=\varnothing$. Consequently, by \ref{E2}
\begin{multline*}
T_y\cap\mathsf{K}_\infty=\pi_\C^{-1}(y)\cap \widehat{F}_\C^{-1}(\mathsf{H}_\infty(\C))\cap\mathsf{K}_\infty\\
=((\pi^{-1}_\C(y)\cap\mathsf{K}_\infty)\cup\pi^{-1}_\C(y))\cap\mathsf{K}_\infty=\pi^{-1}_\C(y)\cap\mathsf{K}_\infty\neq\varnothing,
\end{multline*} 
so by \ref{connected0}(ii) $T_y\cap\mathsf{K}_\infty$ is a singleton. Denote 
$$
R_y:=\widetilde{\mathsf{E}}\cup\mathsf{K}_\infty\cup\bigsqcup_{z\in Y_\C,z\neq y}(\pi^{-1}_\C(z)\cap\widehat{F}_\C^{-1}(\mathsf{H}_\infty(\C)))
$$
and observe $\widehat{F}_\C^{-1}(\mathsf{H}_\infty(\C))=T_y\cup R_y$ by \ref{E2}. Using again $\widetilde{\mathsf{E}}\cap\pi_\C^{-1}(y)=\varnothing$ and $\pi_\C^{-1}(y_1)\cap\pi_\C^{-1}(y_2)=\varnothing$ if $y_1\neq y_2$, we deduce $T_y\cap R_y=T_y\cap\mathsf{K}_\infty$, which is a singleton. Consequently, if $T_y$ was disconnected, then $\widehat{F}_\C^{-1}(\mathsf{H}_\infty(\C))$ would have been disconnected, which contradicts \ref{usrc}. Thus, the first part of claim \ref{connected2} holds. The second part follows readily from \ref{connected0}.

\vspace{2mm}
\noindent{\bf Step 5.} Next we show:

\paragraph{}\label{connected}\em $\widehat{F}_\R^{-1}(\mathsf{H}_\infty(\R))$ is connected\em. Consequently, \em $\widehat{F}_\R(\widehat{F}_\R^{-1}(\mathsf{H}_\infty(\R)))$ is also connected\em.

As $\widehat{F}_\C$ is invariant, $\widehat{F}_\R^{-1}(\mathsf{H}_\infty(\R))=\widehat{F}_\C^{-1}(\mathsf{H}_\infty(\C))\cap\R\PP^k$. By \ref{E2} 
\begin{equation}\label{star}
\widehat{F}_\C^{-1}(\mathsf{H}_\infty(\C))=\widetilde{\mathsf{E}}\cup\mathsf{K}_\infty\cup\bigcup_{y\in Y_\C}(\pi^{-1}_\C(y)\cap \widehat{F}_\C^{-1}(\mathsf{H}_\infty(\C))).
\end{equation} 
\indent Fix $y\in Y_\R$ and consider $T_y:=\pi_\C^{-1}(y)\cap \widehat{F}_\C^{-1}(\mathsf{H}_\infty(\C))$. If $T_y$ is a singleton, we deduce by \ref{connected2} that $T_y\subset\mathsf{K}_\infty$. Otherwise we denote the invariant irreducible components of $T_y$ with $\mathsf{K}_{1,y},\ldots,\mathsf{K}_{r_y,y}$. As the non-invariant irreducible components of $\pi_\C^{-1}(y)$ do not intersect $\R\PP^k$ by \ref{clue}(v), we deduce $T_y\cap\R\PP^k=\bigcup_{i=1}^{r_y}\mathsf{K}_{i,y}\cap\R\PP^k$. 

In addition by \ref{clue}(iv) $\pi_\C^{-1}(y)\cap\R\PP^k=\varnothing$ for all $y\in Y_\C\setminus Y_\R$. Denote the subset of points of $Y_\R$ such that $T_y$ is not a singleton with $Y_\R'$. If we intersect expression \eqref{star} with $\R\PP^k$, we deduce by \ref{E1}
$$
\widehat{F}_\R^{-1}(\mathsf{H}_\infty(\R))=\widehat{F}_\C^{-1}(\mathsf{H}_\infty(\C))\cap\R\PP^k=\mathsf{C}_\infty\cup\bigcup_{y\in Y_\R'}\bigcup_{i=1}^{r_y}\mathsf{K}_{i,y}\cap\R\PP^k.
$$
\indent By \ref{connected2} the projective curve $\bigcup_{i=1}^{r_y}\mathsf{K}_{i,y}$ is connected and the intersection $\mathsf{K}_\infty\cap\bigcup_{i=1}^{r_y}\mathsf{K}_{i,y}$ is a singleton $\{p_y\}$ for each $y\in Y_\R'$. By \ref{connected0}(iii) each projective curve $\mathsf{C}_y:=\bigcup_{i=1}^{r_y}\mathsf{C}_{i,y}$ is connected and each intersection $\mathsf{C}_\infty\cap\mathsf{C}_y\neq\varnothing$. As $\mathsf{C}_\infty=\mathsf{K}_\infty\cap\R\PP^k$ is by \ref{comp} connected, we conclude that
$$
\widehat{F}_\R^{-1}(\mathsf{H}_\infty(\R))=\mathsf{C}_\infty\cup\bigcup_{y\in Y_\R'}\bigcup_{i=1}^{r_y}\mathsf{C}_{i,y}
$$
is connected too.

\vspace{2mm}
\noindent{\bf Final Step.} {\em Conclusion.} As $\widehat{F}_\R(\widehat{F}_\R^{-1}(\mathsf{H}_\infty(\R)))$ is connected, to prove that $S_\infty$ is connected, too, it is enough to show that both sets are equal, that is,
$$
\widehat{F}_\R(\widehat{F}_\R^{-1}(\mathsf{H}_\infty(\R)))=S_\infty. 
$$
As $X_1=Z_1\cap\R\PP^k$ is compact, $\widehat{F}_\R$ is proper, so the restriction 
$$
\widehat{F}_\R|_{\widehat{F}_\R^{-1}(\R^m)}:\widehat{F}_\R^{-1}(\R^m)\to\R^m
$$ 
is also proper. As $\pi^{-1}_\R(\R^2)$ is dense in $X_1$ and $S=f(\R^2)=F_\R(\R^2)=\widehat{F}_\R(\pi_\R^{-1}(\R^2))$, we have
\begin{multline*}
\widehat{F}_\R(\widehat{F}_\R^{-1}(\R^m))=\widehat{F}_\R(\cl_{\widehat{F}_\R^{-1}(\R^m)}(\pi_\R^{-1}(\R^2)))\\
=\cl_{\R^m}(\widehat{F}_\R(\pi_\R^{-1}(\R^2)))=\cl_{\R^m}(f(\R^2))=\cl_{\R^m}(S),
\end{multline*}
so $\widehat{F}_\R(\widehat{F}_\R^{-1}(\R^m))=\cl_{\R^m}(S)$. As $\widehat{F}_\R$ is proper,
\begin{multline*}
\cl_{\R^m}(S)\sqcup S_\infty=\cl_{\R\PP^m}(S)=\cl_{\R\PP^m}(\widehat{F}_\R(\widehat{F}_\R^{-1}(\R^m)))\\
=\widehat{F}_\R(\cl_{X_1}(\widehat{F}_\R^{-1}(\R^m)))=\widehat{F}_\R(X_1)=\widehat{F}_\R(\widehat{F}_\R^{-1}(\R^m))\sqcup \widehat{F}_\R(\widehat{F}_\R^{-1}(\mathsf{H}_\infty(\R))).
\end{multline*}
Consequenlty, $S_\infty=\widehat{F}_\R(\widehat{F}_\R^{-1}(\mathsf{H}_\infty(\R)))$, which is by \ref{connected} connected, as required.
\end{proof}

We are ready to prove Theorem \ref{main1}. We present an independent proof from the one of Theorem \ref{property}. This is enlightening for the proof of Theorem \ref{property} for $n\geq3$.

\begin{proof}[Proof of Theorem \em\ref{main1}]
For $n=1$ the result follows from \cite[1.1]{fe}. To prove that $S_{\infty}$ is connected if $n\geq2$, it is enough to show that for any given pair of points $p,q\in S_{\infty} $ there exists a connected subset of $S_\infty$ containing $p$ and $q$. By Lemma \ref{polcurve} there exist polynomials ${\tt p}_i,{\tt q}_i\in\R[\t]$ such that ${\tt p}_i(0),{\tt q}_i(0)\neq0$ and integers $k_i,\ell_i$ such that the rational paths $\alpha:=(\t^{k_1}{\tt p}_1,\ldots,\t^{k_n}{\tt p}_n)$ and $\beta:=(\t^{\ell_1}{\tt q}_1,\ldots,\t^{\ell_n}{\tt q}_n)$ satisfy
$$
\lim_{t\to0^+}(f\circ\alpha)(t)=p\quad\text{and}\quad\lim_{t\to0^+}(f\circ\beta)(t)=q.
$$
At least one couple $(k_i,\ell_j)$ is of negative integers. Consider the polynomials
$$
P_i(\x,\y):=\left\{\begin{array}{cl}
\y^{|k_i|}{\tt p}_i(\x)&\text{if $k_i<0$,}\\[4pt]
\x^{k_i}{\tt p}_i(\x)&\text{if $k_i\geq0$}
\end{array}\right.
\quad\text{and}\quad
Q_i(\x,\y):=\left\{\begin{array}{cl}
(-\y)^{|\ell_i|}{\tt q}_i(\x)&\text{if $\ell_i<0$,}\\[4pt]
\x^{\ell_i}{\tt q}_i(\x)&\text{if $\ell_i\geq0$}
\end{array}\right.
$$
and let 
$$
h:=\frac{\x\y+1}{2}(P_1,\ldots,P_n)+\frac{1-\x\y}{2}(Q_1,\ldots,Q_n).
$$
Consider the polynomial map $g:=f\circ h:\R^2\to\R^m$ and observe
\begin{itemize}
\item[$\bullet$]$T_0:=\im(g)\subset\im(f)=S$, so $T_{0,\infty}\subset S_{\infty}$.
\item[$\bullet$]$p=\lim_{t\to0^+}g(t,\frac{1}{t})\in T_{0,\infty}$ and $q=\lim_{t\to0^+}g(t,-\frac{1}{t})\in T_{0,\infty}$.
\end{itemize}
As $T_{0,\infty}$ is connected for $n=2$ by Theorem \ref{property}, we are done. 
\end{proof}

\subsection{Proof of Theorem \ref{property}}\setcounter{paragraph}{0} 
Now we prove Theorem \ref{property} for an arbitrary $n$.

\begin{proof}[Proof of Theorem \em \ref{property} \em for an arbitrary $n$.]
The case $n=1$ follows from \cite[1.4]{fe}. Assume $n\geq2$ and write $f:=(\frac{f_1}{f_0},\ldots,\frac{f_m}{f_0})$ where each $f_j\in\R[\x]$, $\gcd(f_0,f_1,\ldots,f_m)=1$ and $f_0$ does not vanish on $\R^n$. The proof is conducted in several steps:

\paragraph{} \em Initial assumptions to simplify the proof\em.
Assume $\deg(f_1)\geq\deg(f_j)$ for $j=1,\ldots,m$. After a change of the type $(y_1,\ldots,y_m)\mapsto(y_1,y_2+b_2y_1,\ldots,y_m+b_my_1)$ where $b_j\in\R$ we can suppose 
$$
\deg(f_1)=\cdots=\deg(f_m)=d>\deg(f_0)=d-e
$$ 
for some $e\geq1$. Denote 
$$
F_j:=\x_0^df_j\Big(\frac{\x_1}{\x_0},\ldots,\frac{\x_n}{\x_0}\Big)
$$ 
and $F_0=\x_0^eF_0'$ where $e\geq1$ and $F_0'\in\R[\x_0,\x_1,\ldots,\x_n]$ is not divisible by $\x_0$. Notice that $\x_0$ does not divide $F_j$ for $j=1,\ldots,m$ because $\deg(f_j)=\deg(F_j)=d$. After a change of the type $(x_1,\ldots,x_m)\mapsto(x_1,x_2+a_2x_1,\ldots,x_n+a_nx_1)$ where $a_i\in\R$ we can suppose $\deg(f_j)=\deg_{\x_1}(f_j)$ for each $j$. 

\paragraph{} \em Equivalent formulation for the statement\em.
To prove that $S_{\infty}$ is connected, it is equivalent to show: \em There exists a point $p_0\in S_{\infty}$ such that for any other point $q\in S_{\infty}$ there exists a connected subset of $S_\infty$ containing $p_0$ and $q$\em. 

We will prove this last fact. Fix 
$$
p_0:=\lim_{t\to0^+}(F_0:F_1:\cdots:F_m)\Big(1:\frac{1}{t}:0:\cdots:0\Big)\in\mathsf{H}_\infty(\R)
$$ 
and let $q\in S_{\infty} $. By Lemma \ref{polcurve} there exist a rational path $$
\alpha:=(\t^{k_1}{\tt p}_1,\ldots,\t^{k_n}{\tt p}_n)\in\R(\t)^n
$$ 
where
\begin{itemize}
\item[$\bullet$]$k_i\in\Z$, $k_{i_0}:=\min\{k_1,\ldots,k_n\}<0$, 
\item[$\bullet$]${\tt p}_i\in\R[\t]$ with ${\tt p}_i(0)\neq 0$ for $i=1,\ldots,n$ and ${\tt p}_{i_0}=\pm1$
\end{itemize} 
and an integer $r\geq1$ such that $q=\lim_{t\to0^+}(f\circ(\alpha+\beta))(t)$ for each $\beta\in(\t)^r\R[\t]^n$. After the change $\t\to\t^6$ we may assume $k_{i_0}\leq-6$ and even. We keep all initial notations.

\paragraph{} \em Construction of an auxiliary regular map\em.
Write ${\tt p}_i:=\sum_{j=0}^{d_i}a_{ij}\t^j$ where $d_i:=\deg({\tt p}_i)$ and consider the formula
$$
P_i(\x,\y):=\sum_{j+k_i<0}a_{ij}\y^{|k_i+j|}+\sum_{j+k_i\geq0}a_{ij}\frac{\x^{e_j}\y}{((\x\y-1)^2+\y^4)^{q_j}}
$$
where $j+k_i+1=4q_j+e_j$, $q_j\geq0$ and $0\leq e_j\leq3$ for each $0\leq j\leq d_i$ such that $j+k_i\geq0$. Observe $P_i(\t,\frac{1}{\t})=\t^{k_i}{\tt p}_i(\t)$ for $i=1,\ldots,n$ and $P_{i_0}={\rm p}_{i_0}\y^{|k_{i_0}|}=\pm\y^{|k_{i_0}|}$. 

Denote $\ell_0:=\max_i\{q_{d_i}:\ d_i+k_i\geq0\}$ and notice $((\x\y-1)^2+\y^4)^\ell P_i\in\R[\x,\y]$ for each $\ell\geq\ell_0$. In addition
\begin{multline*}
\deg(((\x\y-1)^2+\y^4)^\ell P_i)\leq\max\{-k_i+4\ell,4\ell-4q_j+e_j+1:\ k_i+j\geq0\}\\
\leq 4\ell+\max\{-k_i,4\}\leq4\ell+|k_{i_0}|
\end{multline*}
and the equality $\deg(((\x\y-1)^2+\y^4)^\ell P_i)=4\ell+|k_{i_0}|$ holds if and only if $k_i=k_{i_0}$. As $|k_{i_0}|\geq6$, 
$$
\deg_\x(((\x\y-1)^2+\y^4)^\ell P_i)\leq
\left\{\!\!\begin{array}{ll}
2(\ell-q_{d_i})+3&\text{if $k_i+d_i\geq0$}\\[4pt]
2\ell&\text{if $k_i+d_i<0$}
\end{array}\!\!\right\}
< 2\ell+4<4\ell+|k_{i_0}|
$$ 
for all $i=1,\ldots,n$. Let $\ell:=\max\{\ell_0,r\}$ and define
$$
h_i(\x,\y):=\left\{\begin{array}{ll}
((\x\y-1)^2+\y^4)^\ell&\text{if $i=0$,}\\[4pt]
((\x\y-1)^2+\y^4)^\ell P_1(\x,\y)+{\rm p}_1(0)\x^{4\ell+|k_{i_0}|}&\text{if $i=1$,}\\[4pt]
((\x\y-1)^2+\y^4)^\ell P_i(\x,\y)&\text{if $i=2,\ldots,n$.}
\end{array}\right.
$$
Consider the regular map $h:=(\frac{h_1}{h_0},\ldots,\frac{h_n}{h_0}):\R^2\to\R^n$ and observe
\begin{multline*}
h(\t,0)=({\rm p}_1(0)\t^{4\ell+|k_{i_0}|},0,\ldots,0)\quad
\text{and}\quad h\Big(\t,\frac{1}{\t}\Big)=\alpha(\t)+({\rm p}_1(0)\t^{8\ell+|k_{i_0}|},0,\ldots,0),
\end{multline*}
so $\lim_{t\to0^+}(f\circ h)(\frac{1}{t},0)=p_0$ and $\lim_{t\to0^+}(f\circ h)(t,\frac{1}{t})=q$.

\paragraph{} \em Construction of an auxiliary quasi-polynomial map such that $p_0,q$ belong to the set of points at infinity of its image\em.
Let $g:=f\circ h:\R^2\to\R^m$ and denote $g_i:=F_i(h_0,h_1,\ldots,h_n)\in\R[\u_1,\u_2]$ for $i=0,1,\ldots,m$. Observe $g=(\frac{g_1}{g_0},\ldots,\frac{g_m}{g_0})$ and $g_0$ does not vanish on $\R^2$. We claim: \em $g$ is a quasi-polynomial map and $p_0,q\in g(\R^2)_{\infty}$\em. 

First we have
\begin{itemize}
\item[$\bullet$] $T_0:=\im(g)\subset\im(f)=S$, so $T_{0,\infty}\subset S_{\infty}$.
\item[$\bullet$] $p_0=\lim_{t\to0^+}g(\frac{1}{t},0)\in T_{0,\infty}$ and $q=\lim_{t\to0^+}g(t,\frac{1}{t})\in T_{0,\infty}$.
\item[$\bullet$] $g_0=((\u_1\u_2-1)^2+\u_2^4)^{e\ell}F_0'(h_0,h_1,\ldots,h_n)$.
\end{itemize}
\noindent In addition, 
$$
\deg(g_0)=d(4\ell+|k_{i_0}|)-e|k_{i_0}|<d(4\ell+|k_{i_0}|)=\max\{\deg(g_1),\ldots,\deg(g_m)\}.
$$ 

Indeed, since 
\begin{itemize}
\item[$\bullet$] $\deg(F_0')=\deg(f_0)=\deg_{\x_1}(f_0)=d-e$,
\item[$\bullet$] $\deg(F_j)=\deg(f_j)=\deg_{\x_1}(f_j)=d$,
\item[$\bullet$] $\deg(h_i)\leq 4\ell+|k_{i_0}|,\ \deg(h_1)=4\ell+|k_{i_0}|$ and
\item[$\bullet$] $(h_0,h_1,\ldots,h_n)(\t,0)=(1,{\rm p}_1(0)\t^{4\ell+|k_{i_0}|},0,\ldots,0)$, 
\end{itemize} 
we have
$$
\deg(g_0)=4e\ell+(d-e)(4\ell+|k_{i_0}|)=d(4\ell+|k_{i_0}|)-e|k_{i_0}|<d(4\ell+|k_{i_0}|)=\deg(g_j)
$$ 
for each $j=1,\ldots,m$.

Let $\mu:=4\ell+|k_{i_0}|$ and write $H_i:=\u_0^\mu h_i(\frac{\u_1}{\u_0},\frac{\u_2}{\u_0})$ and $G_j:=\u_0^{d\mu}g_j(\frac{\u_1}{\u_0},\frac{\u_2}{\u_0})$, which are homogeneous polynomials. Notice that
\begin{multline*}
G_j=\u_0^{d\mu}F_j\Big(h_0\Big(\frac{\u_1}{\u_0},\frac{\u_2}{\u_0}\Big),h_1\Big(\frac{\u_1}{\u_0},\frac{\u_2}{\u_0}\Big),\ldots,h_n\Big(\frac{\u_1}{\u_0},\frac{\u_2}{\u_0}\Big)\Big)=F_j(H_0,H_1,\ldots,H_n)\\[-15pt]
\end{multline*}
and $G_0=\u_0^{e|k_{i_0}|}G_0'$ where $G_0':=((\u_1\u_2-\u_0^2)^2+\u_2^4)^{e\ell}F_0'(H_0,H_1,\ldots,H_n)$. Counting degrees one realizes: \em $\u_0$ does not divide $G_0'$\em. 

In the following all zero sets are considered in $\R\PP^n$. To prove that $g$ is quasi-polynomial it only remains to check $\{G_0'=0,G_1=0,\ldots,G_m=0\}=\varnothing$. 

Indeed, as $\{F_0'=0,F_1=0,\ldots,F_n=0\}=\varnothing$, the following equality holds
\begin{multline*}
\{G_0'=0,G_1=0,\ldots,G_m=0\}=\{H_0=0,\ldots,H_n=0\}\\
\cup\{((\u_1\u_2-\u_0^2)^2+\u_2^4)^{e\ell}=0,G_1=0,\ldots,G_m=0\}.
\end{multline*}
Notice
\begin{align*}
H_0(\u_0,\u_1,\u_2)&=u_0^{|k_{i_0}|}((\u_1\u_2-\u_0^2)^2+\u_2^4)^\ell,\\
H_1(0,\u_1,\u_2)&=\begin{cases}
{\rm p_1}(0)\u_1^{4\ell+|k_{i_0}|}&\text{if $k_{i0}<k_1$},\\[4pt]
{\rm p_1}(0)\left((\u_1^2\u_2^2+\u_2^4)^\ell\u_2^{|k_{i_0}|}+\u_1^{4\ell+|k_{i_0}|}\right)&\text{if $k_{i0}=k_1$},
\end{cases}\\
H_i(0,0,\u_2)&=\begin{cases}
0&\text{if $i\geq2$ and $k_{i0}<k_i$},\\[4pt]
{\rm p_i}(0)\u_2^{4\ell+|k_{i_0}|}&\text{if $i\geq 2$ and $k_{i0}=k_i$}.
\end{cases}
\end{align*}
Consequently, $\{H_0=0,\ldots,H_n=0\}=\varnothing$ because
\begin{itemize}
\item[$\bullet$] $H_0=0$ provides $\u_0=0$, 
\item[$\bullet$] $H_1(0,\u_1,\u_2)=0$ provides $\u_1=0$ (we have used here that $k_{i_0}$ is even) and
\item[$\bullet$] $H_{i_0}(0,0,\u_2)=0$ provides $\u_2=0$.
\end{itemize}
On the other hand $((\u_1\u_2-\u_0^2)^2+\u_2^4)^{e\ell}=0$ provides $\u_0=0,\u_2=0$. As $G_1=0$, we have 
\begin{multline*}
0=G_1(0,\u_1,0)\\
=F_1(H_0(0,\u_1,0),H_1(0,\u_1,0),\ldots,H_n(0,\u_1,0))=F_1(0,\u_1^\mu,0,\ldots,0).
\end{multline*}
For the last equality use that $\deg_\x(((\x\y-1)^2+\y^4)^\ell P_i)<4\ell+|k_{i_0}|$ for all $i=1,\ldots,n$. As $\deg(F_1)=\deg(f_1)=\deg_{\x_1}(f_1)=\deg_{\x_1}(F_1)$, we obtain $a:=F_1(0,1,0,\ldots,0)\neq0$. As
$$
0=F_1(0,\u_1^\mu,0,\ldots,0)=a\u_1^{d\mu},
$$
we get $\u_1=0$, so $\{((\u_1\u_2-\u_0^2)^2+\u_2^4)^{e\ell}=0,G_1=0,\ldots,G_m=0\}=\varnothing$. Therefore 
$$
\{G_0'=0,G_1=0,\ldots,G_m=0\}=\varnothing,
$$ 
so $g:\R^2\to\R^m$ is a quasi-polynomial map.

\paragraph{}\em Conclusion\em.
By Theorem \ref{property} for the case $n=2$ (already proved in \ref{s3a}) applied to the quasi-polynomial map $g$, we deduce that $T_{0,\infty}=(g(\R^2))_\infty\subset S_\infty$ is connected and since $p_0,q\in T_{0,\infty}$, we are done.
\end{proof}

The set of points at infinity of a semialgebraic set $S\subset\R^m$ is a semialgebraic subset of the hyperplane of infinity $\mathsf{H}_\infty(\R)$ of $\R\PP^m$. It seems reasonable to ask the following. 

\begin{squestion}\em
Let $\mathsf{S}_0$ be a connected closed semialgebraic subset of $\mathsf{H}_\infty(\R)$. \em Is there a polynomial (or a quasi-polynomial) map $f:\R^n\to\R^m$ such that $f(\R^n)_\infty=\mathsf{S}_0$?\em
\end{squestion}

For $m=2$ the answer is positive but for higher dimension we have no further information.
\begin{examples}\label{exxe}\em
For each connected closed semialgebraic subset $\mathsf{S}_0\subset\ell_\infty(\R)$ there exists a polynomial map $f:\R^2\to\R^2$ such that $\dim_\R(f(\R^2))=2$ and $(f(\R^2))_\infty=\mathsf{S}_0$. 

If $\mathsf{S}_0$ is not a singleton, the assertion follows from \cite[1.2]{u2}. On the other hand, the polynomial map $f:\R^2\to\R^2,\ (x,y)\mapsto(x,y^2+x^2)$ satisfies $f(\R^2))_\infty=\{(0:1:0)\}$, which is a singleton.
\end{examples}

\begin{sremark}\em
We have introduced quasi-polynomial maps to understand the limit of the image of a regular map to have a connected set of points at infinity. The following examples show that they do not enjoy the desired behavior.

(i) The composition of the quasi-polynomial maps 
\begin{multline*}
g:\R^2\to\R^2,\ (x,y)\mapsto(x,y^2)\\
\text{and}\quad f:\R^2\to\R^2,\ (x,y)\mapsto\Big(\frac{x^3}{1+x^2+y^2},\frac{y^3}{1+x^2+y^2}\Big)
\end{multline*} 
is not a quasi-polynomial map.

(ii) The image of the quasi-polynomial map $g:\R\to\R^2,\ t\mapsto(\frac{1}{1+t^2},1+t^2)$ is the semialgebraic set $S=\{xy=1, y\geq1\}$, which is not a polynomial image of $\R^n$.
\end{sremark}

\section{Set of points at infinity of a regular image of $\R^n$}\label{s4}

We have proved in Section \ref{s3} that the set of points at infinity of the image of a quasi-polynomial map $f:\R^n\to\R^m$ is connected. This is no longer true in general for regular maps even if $n=1$. 

\subsection{Preliminary examples}
We present some examples to illustrate the previous fact and to show that the conditions in the statement of Theorem \ref{property} are sharp.

\begin{examples}\label{ex1}\em
(i) The image of the regular map
$$
f:\R^2\to\R^2,\ (x,y)\mapsto\Big((xy-1)^2+x^2,\frac{1}{(xy-1)^2+x^2}\Big)
$$
is $S:=\{a>0,ab=1\}$, so $S_\infty=\{(0:1:0),(0:0:1)\}$ is disconnected.

(ii) The image of the regular map 
$$
f:\R^2\to\R^2,\ (x,y)\mapsto\Big(\frac{x^2}{1+y^2},\frac{y^2}{1+x^2}\Big)
$$
is $S:=\{a\geq0,b\geq0,ab<1\}$, so $S_\infty=\{(0:1:0),(0:0:1)\}$ is disconnected. If we write $f:=(\frac{f_1}{f_0},\frac{f_2}{f_0})$ where each $f_i$ is a non-zero polynomial, then $\deg(f_0)\!=\max\{\deg(f_1),\deg(f_2)\}$.

(iii) The image of the regular map
$$
f:\R^2\to\R^2,\ (x,y)\mapsto\Big(\frac{(1+x^4)y^6}{(1+y^4)^2},\frac{(1+y^4)x^4}{(1+x^4)^3}\Big)
$$
is a semialgebraic set $S$ such that $S_\infty=\{(0:1:0),(0:0:1)\}$ is disconnected. If we write $f:=(\frac{f_1}{f_0},\frac{f_2}{f_0})$ where each $f_i$ is a non-zero polynomial, then $\deg(f_0)\!<\max\{\deg(f_1),\deg(f_2)\}$. The set $Y_\C=\{(0:1:0),(0:0:1)\}$ of indeterminacy of the rational map
$$
F_\C:=(F_0:F_1:F_2):\C\PP^2\dashrightarrow\C\PP^2
$$ 
is contained in $\{F_0'=0\}\cap\R\PP^2$ where $F_0'=(\x_0^4+\x_1^4)^3(\x_0^4+\x_2^4)^2$ and $F_0=\x_0^2F_0'$.
\begin{proof}
Observe that $f(\R^2)\subset\{a^2b\leq1,a\geq0,b\geq0\}$ because
$$
\Big(\frac{(1+x^4)y^6}{(1+y^4)^2}\Big)^2\Big(\frac{(1+y^4)x^4}{(1+x^4)^3}\Big)=\Big(\frac{y^4}{1+y^4}\Big)^3\Big(\frac{x^4}{1+x^4}\Big)\leq 1.
$$
Thus, $S_\infty\subset\{(0:1:0),(0:0:1)\}$. To prove the converse inclusion it is enough to pick two rational paths $\alpha_i:(0,1]\to\R^2$ such that $\lim_{t\to0^+}\|\alpha_i(t)\|^2=+\infty$ and 
\begin{align*}
&\lim_{t\to0^+}\frac{g_1}{g_0}(\alpha_1(t))=+\infty,\ \lim_{t\to0^+}\frac{g_2}{g_0}(\alpha_1(t))=0,\\ 
&\lim_{t\to0^+}\frac{g_1}{g_0}(\alpha_2(t))=0,\ \lim_{t\to0^+}\frac{g_2}{g_0}(\alpha_2(t))=+\infty.
\end{align*}
For instance, $\alpha_1(\t):=(\frac{1}{\t},1)$ and $\alpha_2(t):=(1,\frac{1}{\t})$ do the job.
\end{proof}
\end{examples}

The following question arises naturally.

\begin{squestion}\em\label{q}
Given a closed semialgebraic subset $\mathsf{S}_0\subset\ell_\infty(\R)\subset\R\PP^2$: \em Is there a regular map $f:\R^2\to\R^2$ such that $(f(\R^2))_\infty=\mathsf{S}_0$? 
\end{squestion}

In case $\mathsf{S}_0$ is either connected or a finite set, the answer is by Examples \ref{exxe} positive.

\subsection{More sophisticated examples}\setcounter{paragraph}{0} 
If $\mathsf{S}_0$ is a finite set, we proceed as follows.

\begin{prop}
Let $H_i:=c_i\x-d_i\y$ be linear equations such that $c_id_i\neq0$ and the lines $\ell_i:=\{H_i=0\}$ are pairwise different. Denote $p_i:=(0:d_i^2:c_i^2)$. Then the image of the regular map
$$
h:=(h_1,h_2):\R^2\to\R^2,\ (x,y)\mapsto\Big(\frac{x^2}{1+\prod_{i=1}^rH_i(x,y)^2},\frac{y^2}{1+\prod_{i=1}^rH_i(x,y)^2}\Big)
$$
is a semialgebraic set $S$ such that $S_\infty=\{p_1,\ldots,p_r\}$.
\end{prop}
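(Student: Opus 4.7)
The plan is to establish the inclusions $\{p_1,\ldots,p_r\}\subseteq S_\infty$ and $S_\infty\subseteq\{p_1,\ldots,p_r\}$ separately. For the former, I would observe that $H_i$, and hence the product $P:=\prod_{j=1}^rH_j$, vanishes identically on $\ell_i$, so the restriction of $h$ to $\ell_i$ simplifies to the polynomial map $(x,y)\mapsto(x^2,y^2)$. Parametrising $\ell_i$ by $\gamma(s):=(d_is,c_is)$, I get $h(\gamma(s))=(d_i^2s^2,c_i^2s^2)$, whose projective limit as $s\to+\infty$ is $(0:d_i^2:c_i^2)=p_i$; hence every $p_i$ belongs to $S_\infty$.

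For the converse, I would fix $p\in S_\infty$ and apply Lemma~\ref{polcurve} to the regular map $h$ (with $S':=S$ itself) to obtain, after reordering the two coordinates if necessary, a rational path $\alpha(\t)=(\pm\t^{k_1},\t^{k_2}{\tt p}_2(\t))\in\R(\t)^2$ with $k_1=\min(k_1,k_2)<0$, ${\tt p}_2(0)\neq 0$, and $\lim_{t\to 0^+}h(\alpha(t))=p$. Substituting yields $H_j(\alpha(\t))=\pm c_j\t^{k_1}-d_j{\tt p}_2(\t)\t^{k_2}$, and the plan is to track the $\t$-adic orders of $H_j(\alpha)$, of $P(\alpha)^2$, of $1+P(\alpha)^2$, and finally of $h_i(\alpha)=\alpha_i(\t)^2/(1+P(\alpha)^2)$ in order to read off the projective limit $p$.

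The main obstacle is the case analysis on the leading direction of $\alpha$ at $\t=0$. If either $k_1<k_2$, or $k_1=k_2$ but ${\tt p}_2(0)\neq\pm c_j/d_j$ for every $j$, then each $H_j(\alpha)$ has nonzero leading coefficient (using only $c_jd_j\neq 0$), so $P(\alpha)^2$ has order exactly $2rk_1\leq 2k_1\leq 2k_2$; a routine comparison of orders then shows that $h_1(\alpha)$ and $h_2(\alpha)$ remain bounded as $t\to 0^+$, contradicting $p\in S_\infty$. Hence one must have $k_1=k_2=:k<0$, and since the lines $\ell_j$ are pairwise distinct there is a unique index $j$ with ${\tt p}_2(0)=\pm c_j/d_j$. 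Writing $\pm c_j-d_j{\tt p}_2(\t)=\t^m\widetilde H_j(\t)$ with $m\geq 1$ and $\widetilde H_j(0)\neq 0$ sharpens the order of $P(\alpha)^2$ to $2rk+2m$. A final split on whether $1+P(\alpha)^2$ is dominated by the constant $1$ (when $m\geq r|k|$) or by $P(\alpha)^2$ (when $m<r|k|$) shows that in any subcase where $h(\alpha(t))$ actually diverges, the ratio $h_1(\alpha)/h_2(\alpha)=\alpha_1(\t)^2/\alpha_2(\t)^2$ tends to $1/{\tt p}_2(0)^2=d_j^2/c_j^2$; so the projective limit is $(0:d_j^2:c_j^2)=p_j$, as required.
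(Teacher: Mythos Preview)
Your proposal is correct and follows essentially the same approach as the paper: both establish $\{p_i\}\subset S_\infty$ by restricting $h$ to the lines $\ell_i$, and both prove the reverse inclusion by applying Lemma~\ref{polcurve} to obtain a rational path $\alpha$ and then tracking $\t$-adic orders of $H_j(\alpha)$, $P(\alpha)^2$ and $h_i(\alpha)$ to force the asymptotic direction of $\alpha$ onto some $\ell_j$, yielding $p=p_j$. The only cosmetic differences are that the paper uses a smaller dense subset $S'$ (which is not actually exploited) and computes the final projective limit directly rather than via the cancellation $h_1/h_2=\alpha_1^2/\alpha_2^2$; one tiny edge case you may want to handle separately is ${\tt p}_2\equiv\pm c_j/d_j$ constant, where $H_j(\alpha)\equiv 0$ and your factorization $\t^m\widetilde H_j$ with $\widetilde H_j(0)\neq 0$ is unavailable, but there $1+P(\alpha)^2\equiv 1$ and the conclusion is immediate.
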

\begin{proof}
Observe first
$$
h\Big(\frac{d_i}{t},\frac{c_i}{t}\Big)=\Big(\frac{d_i^2}{t^2},\frac{c_i^2}{t^2}\Big)\equiv(t^2:d_i^2:c_i^2)\stackrel{t\to 0^+}{\longrightarrow}(0:d_i^2:c_i^2)=p_i,
$$
so $\{p_1,\ldots,p_r\}\subset S_\infty$. Next, we prove the converse inclusion $S_\infty\subset\{p_1,\ldots,p_r\}$. 

The Jacobian of $h$ is not identically zero, so $S:=h(\R^2)$ has dimension $2$. In addition, $S$ is pure dimensional because it is a regular image of $\R^2$. Thus,
$$
S':=S\setminus(h(\{x=0\})\cup h(\{y=0\})\cup h(\{\Jac(h)=0\}))
$$ 
is dense in $S$. Fix a point $p\in S_\infty$. After interchanging the variables and changing $\x$ with $-\x$ if necessary, there exists by Lemma \ref{polcurve} a rational path $\alpha:=(\t^{-k},\t^\ell{\rm p})$ where $k,\ell\in\Z$, $-k=\min\{-k,\ell\}<0$, ${\rm p}\in\R[\t]$ and ${\rm p}(0)\neq0$ such that $\lim_{t\to0^+}(h\circ\alpha)=p$ and $(h\circ\alpha)((0,\veps))\subset S'$ for $\veps>0$ small enough. Our change of variables does not modify the structure of $h$, so we keep the same notations. As $p\in\ell_\infty(\R)$, one of the following limits is infinity:
\begin{align*}
\lim_{t\to0^{+}}(h_1\circ\alpha)(t)&=\lim_{t\to0^{+}}\frac{t^{2k(r-1)}}{t^{2kr}+\prod_{i=1}^rH_i(1, t^{k+\ell}{\rm p}(t))^2},\\[6pt]
\lim_{t\to0^{+}}(h_2\circ\alpha)(t)&=\lim_{t\to0^{+}}\frac{t^{2kr+2\ell}{\rm p}(t)^2}{t^{2kr}+\prod_{i=1}^rH_i(1,t^{k+\ell}{\rm p}(t))^2}.
\end{align*}
Notice the following: 
\begin{itemize}
\item[(1)] The first limit is infinity if and only if 
$$
t^{2kr}+\prod_{i=1}^rH_i(1,\t^{k+\ell}{\rm p}(\t))^2=\t^{\nu_1}{\rm q}_1(\t)
$$ 
for some ${\rm q}_1\in\R[\t]$ and an integer $\nu_1\geq 2k(r-1)+1$. 
\item[(2)] The second limit is infinity if and only if $\ell<0$ and 
$$
t^{2kr}+\prod_{i=1}^rH_i(1,\t^{k+\ell}{\rm p}(\t))^2=\t^{\nu_2}{\rm q}_2(\t)
$$ 
for some ${\rm q}_2\in\R[\t]$ and an integer $\nu_2\geq 2kr+2\ell+1$.
\end{itemize}
As $k+\ell\geq0$, we deduce in both cases that there exists an index $i=1,\ldots,r$ such that $\lim_{t\to0^+}H_i(1,t^{k+\ell}{\rm p}(t))=0$. Since by hypothesis $c_i,d_i\neq0$, we conclude $\ell=-k<0$ and ${\rm p}(0)=\frac{c_i}{d_i}$. Denote $\mu:=\nu_j-2k(r-1)-1\geq0$ and ${\rm q}:={\rm q}_j$ in both cases, so
$$
\t^{2kr}+\prod_{i=1}^rH_i(1,\t^{k+\ell}{\rm p}(\t))^2=\t^{2k(r-1)+1}\t^\mu{\rm q}.
$$
We deduce
\begin{equation*}
\begin{split}
p&=\lim_{t\to0^+}(t^{2kr}+\prod_{i=1}^rH_i(1,{\rm p}(t))^2:t^{2k(r-1)}:t^{2kr+2\ell}{\rm p}(t)^2)\\
&=\lim_{t\to0^+}(t^{2k(r-1)+1}t^\mu{\rm q}(t):t^{2k(r-1)}:t^{2k(r-1)}{\rm p}(t)^2)\\
&=\lim_{t\to0^+}(t^{\mu+1}{\rm q}(t):1:{\rm p}(t)^2)=\Big(0:1:\frac{c_i^2}{d_i^2}\Big),
\end{split}
\end{equation*}
so $S_\infty\subset\{p_1,\ldots,p_r\}$. 
\end{proof}

We present next an example of a regular image $S$ such that $S_\infty$ has exactly two $1$-dimensional connected components.

\begin{lem}\label{2int}
There exists a regular map $f:\R^2\to\R^2$ whose image $S$ satisfies
$$
S_\infty=\Big\{(0:u:1):\ 0\leq u\leq \frac{1}{2}\Big\}\cup\Big\{(0:1:v):\ 0\leq v\leq \frac{1}{2}\Big\}.
$$ 
\end{lem}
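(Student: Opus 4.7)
The strategy is constructive: I will exhibit an explicit regular map $f:\R^2\to\R^2$ whose image $S:=f(\R^2)$ has the prescribed set of points at infinity, and then verify the two inclusions directly. As a preliminary observation, each of the two arcs is, on its own, the set of points at infinity of a polynomial image of $\R^2$: the polynomial map $\varphi(u,t):=(u^2(1+t^2),\,u^2t^2/2)$ has image the closed wedge $\{a\ge 0,\,0\le b\le a/2\}$, whose closure in $\R\PP^2$ meets $\ell_\infty(\R)$ in exactly $\{(0:1:v):0\le v\le 1/2\}$, and the coordinate swap $\psi(u,t):=(u^2 t^2/2,\,u^2(1+t^2))$ realises the other arc. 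The real task is therefore to combine these two parametrisations into a single regular map of $\R^2$ without producing further points at infinity.

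The plan is to write $f$ in the form $f(x,y)=(P_1(x,y)/D(x,y),\,P_2(x,y)/D(x,y))$, with $D$ a sum of squares of polynomials (in particular, nowhere vanishing on $\R^2$), and with $P_1,P_2,D$ tuned so that along one family of rational curves going to infinity $f$ is asymptotically comparable with $\varphi$, while along the complementary family it is asymptotically comparable with $\psi$. The denominator $D$ will play exactly the role of the factor $(1+x^4)^2(1+y^4)^2$ in Example~\ref{ex1}(iii): it will suppress the intermediate asymptotic regime in which the slope $f_2/f_1$ would otherwise approach a value in the open interval $(1/2,2)$. Once such an $f$ is in hand, the inclusion $\supseteq$ of the claimed equality is immediate: for each $v\in[0,1/2]$ I will exhibit an explicit rational path $\alpha_v(t)$ with $\lim_{t\to 0^+}(f\circ\alpha_v)(t)=(0:1:v)$, and a symmetric family $\beta_u(t)$ producing every $(0:u:1)$ with $u\in[0,1/2]$; these paths are essentially pullbacks of the natural parametrisations of $\varphi$ and $\psi$.

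The reverse inclusion is obtained via Lemma~\ref{polcurve}: any $p\in S_\infty$ is realised as $\lim_{t\to 0^+}(f\circ\alpha)(t)$ for a rational path $\alpha(t)=(\pm t^{k_1},t^{k_2}{\tt p}_2(t))$ with $k_1=\min\{k_1,k_2\}<0$ (after possibly permuting the coordinates of $\R^2$) and ${\tt p}_2\in\R[t]$, ${\tt p}_2(0)\ne 0$. Substituting into $f$ and comparing the orders in $t$ of the resulting Laurent expansions of $f_1\circ\alpha$ and $f_2\circ\alpha$, a finite case distinction on the integers $(k_1,k_2)$ and on ${\tt p}_2(0)$ pins down all possible limits and shows that they lie in one of the two prescribed arcs. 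The main obstacle is precisely the construction step: arranging $P_1,P_2,D$ so that this rational-path case analysis never produces a slope $f_2/f_1$ strictly between $1/2$ and $2$. This is exactly the point at which allowing a non-constant positive denominator $D$ is essential, since in the polynomial setting Theorem~\ref{main1} forces $S_\infty$ to be connected; all the remaining work reduces to routine bookkeeping with leading terms of Laurent series once the formula for $f$ is in place.
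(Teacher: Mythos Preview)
Your proposal is an outline, not a proof: the entire content of the lemma is the \emph{existence} of such an $f$, and you never write one down. You yourself flag this honestly when you say ``the main obstacle is precisely the construction step'' and that everything else is ``routine bookkeeping \ldots once the formula for $f$ is in place''---but the formula never appears. Saying that $f$ will be of the form $(P_1/D,P_2/D)$ with $D$ a sum of squares ``tuned'' to suppress intermediate slopes is a description of a wish, not a construction; the difficulty is exactly that naive ways of gluing $\varphi$ and $\psi$ via a partition-of-unity-style device tend to produce an entire interval of slopes at infinity (and hence, by Theorem~\ref{main1} when the glue is polynomial, a connected $S_\infty$). Until you exhibit specific $P_1,P_2,D$ and carry out the Laurent case analysis, nothing has been proved.

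For comparison, the paper avoids the single-formula difficulty by a two-step composition $f=h\circ g$. The first map $g(x,y)=\bigl(\tfrac{x^2+1}{1+x^2y^2},\tfrac{y^2+1}{1+x^2y^2}\bigr)$ sends $\R^2$ onto a bounded-plus-two-strips region $S_1$ with $T\subset S_1\subset T\cup[0,2]^2$, where $T=\{0<a\le1,b>0\}\cup\{0<b\le1,a>0\}$; this already separates the plane into two unbounded ``arms'' with $S_{1,\infty}$ just two points. The second map $h$ is a symmetric regular map carrying each arm into the corresponding wedge $B_i$ while keeping the bounded core bounded, and one checks $(h(A_i))_\infty=B_{i,\infty}$ directly. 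This decomposition is what makes the verification tractable: the potentially dangerous intermediate asymptotic regime is eliminated by $g$ before $h$ ever sees it, rather than by a delicately chosen denominator in a single formula. If you want to pursue your direct approach, you would need to produce explicit $P_1,P_2,D$ and then genuinely run the case split from Lemma~\ref{polcurve}; absent that, the argument is incomplete.
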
 
\begin{proof}
We build $f$ as the composition of two regular maps that we construct next:

\paragraph{}
\em Let $T:=\{0<a\leq1,b>0\}\cup\{0<b\leq1,a>0\}$. The image of the regular map 
$$
g:\R^2\to\R^2,\ (x,y)\mapsto\Big(\frac{x^2+1}{1+x^2y^2},\frac{y^2+1}{1+x^2y^2}\Big)
$$
is a semialgebraic set $S_1$ such that $T\subset S_1\subset T\cup[0,2]^2\subset\{a>0,b>0\}$\em. In particular, $S_{1,\infty}=\{(0:1:0),(0:0:1)\}$.

We check first $S_1\subset T\cup[0,2]^2$. Let $(x,y)\in \R^2$ and write $f_3(x,y)=:(a,b)$. We claim: \em If $b>2$, then $0<a\leq 1$\em. If $a>2$, we will have by symmetry $0<b\leq 1$, so $S_1\subset T\cup[0,2]^2$.

It is clear that $a>0$. Suppose by contradiction $a>1$. Then $x^2>x^2y^2$ and $y^2>1+2x^2y^2$, so
$$
x^2<x^2+2x^4y^2<x^2y^2<x^2,
$$
which is a contradiction. 

Next, we check $T\subset S_1$. It is enough to prove by symmetry that $T_1:=\{0<a\leq1,b>0\}\subset S_1$. Let $(a,b)\in T_1$ and consider the system of equations
$$
\left\{\begin{array}{l}
x^2+1=a(1+x^2y^2),\\[4pt]
y^2+1=b(1+x^2y^2).
\end{array}\right.\quad\leadsto\quad
\left\{\begin{array}{l}
b(x^2+1)-a(y^2+1)=0,\\[4pt]
ay^4+(a-1-b)y^2+b-1=0.
\end{array}\right.
$$
A simple discussion shows that both systems are equivalent. The discriminant $\Delta$ of the biquadratic equation $ay^4+(a-1-b)y^2+b-1=0$ is
$$
(a-1-b)^2-4a(b-1)=(b-3a+1)^2+8a(1-a),
$$
which is $\geq0$ because $0<a\leq1$. As $a-1-b<0$, the real number
$$
z_0:=\frac{b+(1-a)+\sqrt{(b-3a+1)^2+8a(1-a)}}{2a}
$$
is positive and has a square root $y_0$, which is a solution of the biquadratic equation 
$$
ay^4+(a-1-b)y^2+b-1=0. 
$$
The equation $b(x^2+1)-a(y_0^2+1)=0$ has a real solution $x_0$ if 
$$
0<2(a(y_0^2+1)-b)=2az_0+2a-2b=-b+1+a+\sqrt{(b-3a+1)^2+8a(1-a)}
$$
or equivalently if
$$
0<(b-3a+1)^2+8a(1-a)-(b-1-a)^2=4b(1-a).
$$
As $b>0$ and $a<1$, it holds $4b(1-a)>0$, so we deduce $(a,b)\in S_1$, as required.

\paragraph{}
Let $B_1:=\{0<2a\leq b\}$ and $B_2:=\{0<2b\leq a\}$. Write also $A_1:=\{0<x\leq 1,\ 4\leq y\}$ and $A_2:=\{0<y\leq 1,\ 4\leq x\}$. \em Then the image of $A:=A_1\cup A_2$ under the regular map
\begin{equation*}
\begin{split}
h:\R^2&\to\R^2,\\ 
(x,y)&\mapsto\Big(\frac{x((y-1)^2y^2+2(x-1)^2x)}{1+x(x-1)^2y(y-1)^2},\frac{y((x-1)^2x^2+2(y-1)^2y)}{1+x(x-1)^2y(y-1)^2}\Big),
\end{split}
\end{equation*}
is a semialgebraic set $S_2$ contained in $B:=B_1\cup B_2$, which satisfies
$$
S_{2,\infty}=B_{1,\infty}\cup B_{2,\infty}=\Big\{(0:u:1):\ 0\leq u\leq \frac{1}{2}\Big\}\cup\Big\{(0:1:v):\ 0\leq v\leq \frac{1}{2}\Big\}.
$$\em
Write $h:=(\frac{h_1}{h_0},\frac{h_2}{h_0})$ where
\begin{align*}
&h_0(\x,\y):=1+\x(\x-1)^2\y(\y-1)^2,\\
&h_1(\x,\y):=\x((\y-1)^2\y^2+2(\x-1)^2\x),\\
&h_2(\x,\y):=\y((\x-1)^2\x^2+2(\y-1)^2\y).
\end{align*} 
As $h_1(\y,\x)=h_2(\x,\y)$ and $h_0(\y,\x)=h_0(\x,\y)$, we have $\frac{h_1(\y,\x)}{h_0(\y,\x)}=\frac{h_2(\x,\y)}{h_0(\x,\y)}$, so it is enough to prove: \em $h(A_1)\subset B_1$ and $(h(A_1))_\infty=B_{1,\infty}$\em. 

Let $(x,y)\in A_1$. It holds $h_1(x,y)>0$ and
\begin{multline*}
h_2(x,y)-2h_1(x,y)=y((x-1)^2x^2+2(y-1)^2y)-2(x((y-1)^2y^2+2(x-1)^2x))\\
=2(1-x)(y-1)^2y^2+(y-4)(x-1)^2x^2\geq0
\end{multline*}
because $0<x\leq 1$ and $y\geq 4$, so $h(x,y)\in B_1$. Therefore, $(h(A_1))_\infty\subset B_{1,\infty}$ and it only remains to check $B_{1,\infty}\subset(h(A_1))_\infty$.

Indeed, for each $0<\lambda\leq 1$ consider the half-line $x=\lambda,y=t\geq4$ and the curve $C_\lambda\subset h(A_1)$ parametrized by
\begin{multline*}
\alpha_\lambda(t):=(\alpha_{\lambda1}(t),\alpha_{\lambda2}(t)):=g(\lambda,t)
=\Big(\frac{\lambda(t-1)^2t^2+2\lambda\mu_\lambda)}{1+\mu_\lambda t(t-1)^2},\frac{2(t-1)^2t^2+\lambda\mu_\lambda t}{1+\mu_\lambda t(t-1)^2}\Big)\\[-15pt]
\end{multline*}
where $\mu_\lambda:=\lambda(\lambda-1)^2$. As
$$
\lim_{t\to+\infty}\alpha_{\lambda1}(t)=+\infty,\qquad\lim_{t\to+\infty}\alpha_{\lambda2}(t)=+\infty\qquad\text{and}\qquad\lim_{t\to+\infty}\frac{\alpha_{\lambda1}(t)}{\alpha_{\lambda2}(t)}=\frac{\lambda}{2},
$$
we deduce $C_{\lambda,\infty}=\{(0:\frac{\lambda}{2}:1)\}$, so
$$
B_{1,\infty}=\bigcup_{0<\lambda\leq 1}C_{\lambda,\infty}\subset\Big(\bigcup_{0<\lambda\leq 1}C_\lambda\Big)_\infty=(h(A_1))_\infty,
$$
as required.

\paragraph{}
The image of the regular map $f:=h\circ g:\R^2\to\R^2$ is a semialgebraic set $S$ such that $S_\infty=\{(0:u:1):\ 0\leq u\leq \frac{1}{2}\}\cup\{(0:1:v):\ 0\leq v\leq \frac{1}{2}\}$, as required.
\end{proof}

\begin{squestion}\em
Let $\mathsf{S}_0$ be a closed semialgebraic subset of the hyperplane of infinity $\mathsf{H}_\infty(\R)$ of $\R\PP^m$. \em Is there a regular map $f:\R^n\to\R^m$ such that $(f(\R^n))_\infty=\mathsf{S}_0$?
\end{squestion}

\end{document}